\newtheorem{theorem}{Theorem}[section]
\newtheorem{lem}[theorem]{Lemma}
\newtheorem{prop}[theorem]{Proposition}
\newtheorem{facc}[theorem]{Fact}
\newtheorem{quest}[theorem]{Question}
\theoremstyle{definition}
\newtheorem{rem}[theorem]{Remark}
\numberwithin{equation}{section}
\newcommand{\N}{\mathbb{N}}
\newcommand{\Z}{\mathbb{Z}}
\newcommand{\Q}{\mathbb{Q}}
\newcommand{\C}{\mathbb{C}}
\begin{document}

\title{Linear independence of trigonometric numbers}

\author{Arno Berger\\[2mm] Mathematical and
    Statistical Sciences\\University of Alberta\\Edmonton, Alberta,
  {\sc Canada}}

\maketitle

\begin{abstract}
\noindent
Given any two rational numbers $r_1$ and $r_2$, a necessary and
sufficient condition is established for the three numbers $1$, $\cos
(\pi r_1)$, and $\cos (\pi r_2)$ to be rationally independent. Extending a classical
fact sometimes attributed to I.\ Niven, the result even yields linear independence over larger number
fields. The tools employed in the proof are applicable also in the
case of more than two trigonometric numbers. As an 
application, a complete classification is given of all planar
triangles with rational angles and side lengths each containing at
most one square root. Such a classification was hitherto known only
in the special case of right triangles.
\end{abstract}
\hspace*{6.6mm}{\small {\bf Keywords.} Niven's Theorem, rational (in)dependence, cyclotomic
  polynomial,}\\[-1mm]
\hspace*{25.5mm}{\small real quadratic number field, rational triangle, high school triangle.}

\noindent
\hspace*{6.6mm}{\small {\bf MSC2010.} 11R09, 11R11, 12Y05, 97G60.}

\medskip

\section{Introduction}

Denote the fields of all rational and all complex numbers by $\Q$ and $\C$,
respectively. For every $r\in \Q$ let $N(r)$
be the smallest positive integer for which $rN(r)$ is an
integer; equivalently, if $r=p/q$ with coprime integers $p$ and $q>0$ then
simply $N(r)=q$. Recall that $\C$ is a linear space over any subfield
$K\subset \C$, and hence the notion of a family of complex numbers being
{\em linearly\/} ({\em in}){\em dependent over\/} $K$, or $K$-({\em
  in}){\em dependent\/} for short, is well-defined.
Given any $r\in \Q$, the algebraic properties of
trigonometric numbers such as $\cos(\pi r)$, $\sin (\pi r)$, and $\tan
(\pi r)$ have long been of interest; e.g., see \cite{Carlitz62,
  Lehmer33, Niven56, 
  Olmsted45, Richmond36, Schaumberger74, Underwood21, Underwood22} for time-honoured, and
\cite{Calcut09, Calcut10, Jahnel10, Varona06} for more recent accounts. A classical
fact in this regard, already recorded in \cite{Underwood21} but
sometimes attributed to
\cite{Niven56} as {\em Niven's Theorem\/}, asserts that if the number
$2\cos (\pi r)$ is rational then it is in fact an integer. In other words,
\begin{equation}\label{eq001}
\{ 2 \cos (\pi r) : r \in \Q\} \cap \Q = \{-2,-1,0,1,2\} \, .
\end{equation}
Analogous results exist for $\sin
(\pi r)$ and $\tan (\pi r)$.
The present article is motivated by
(\ref{eq001}) primarily through an equivalent form, stated here as

\begin{facc}\label{fac1}
Let $r\in \Q$. Then the following are equivalent:
\begin{enumerate}
\item The numbers $1$ and $\cos(\pi r)$ are $\Q$-independent;
\item $N(r)\ge 4$.
\end{enumerate}
\end{facc}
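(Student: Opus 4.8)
The plan is to strip away the linear-algebra language first and then read everything off from Niven's Theorem in the form (\ref{eq001}). Since only the two numbers $1$ and $\cos(\pi r)$ are involved, any nontrivial $\Q$-linear relation $a\cdot 1+b\cdot\cos(\pi r)=0$ with $(a,b)\in\Q^2\setminus\{(0,0)\}$ must have $b\neq 0$ (otherwise $a=0$ as well), whence $\cos(\pi r)=-a/b\in\Q$; conversely, if $\cos(\pi r)\in\Q$, then $\cos(\pi r)\cdot 1-1\cdot\cos(\pi r)=0$ is exactly such a relation. Hence statement (i) is equivalent to the irrationality of $\cos(\pi r)$, and the whole assertion reduces to proving
\[
\cos(\pi r)\in\Q \iff N(r)\le 3 .
\]

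Next I would invoke (\ref{eq001}). Multiplication by $2$ does not affect rationality, so $\cos(\pi r)\in\Q$ if and only if $2\cos(\pi r)\in\Q$, which by (\ref{eq001}) occurs precisely when $2\cos(\pi r)\in\{-2,-1,0,1,2\}$, that is,
\[
\cos(\pi r)\in\Bigl\{-1,-\tfrac12,0,\tfrac12,1\Bigr\}.
\]
The task is thereby reduced to the purely elementary question of determining, for each of these five admissible values, the set of $r\in\Q$ realizing it and reading off the corresponding value of $N(r)$.

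The remaining step is bookkeeping with the elementary solutions of $\cos(\pi r)=c$. Writing $r=p/q$ in lowest terms, I would check the five cases directly: $\cos(\pi r)=\pm1$ forces $r\in\Z$, hence $N(r)=1$; $\cos(\pi r)=0$ forces $r\in\tfrac12+\Z$, hence $N(r)=2$; and $\cos(\pi r)=\pm\tfrac12$ forces $r\equiv\pm\tfrac13$ or $\pm\tfrac23\pmod 2$, hence $N(r)=3$. Conversely, a short computation shows that every $r$ with $N(r)\in\{1,2,3\}$ produces one of these five values. Consequently $\cos(\pi r)\in\Q$ exactly when $N(r)\le 3$, so that $1$ and $\cos(\pi r)$ are $\Q$-dependent exactly when $N(r)\le 3$; negating yields the stated equivalence (i)$\Leftrightarrow$(ii). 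The one point demanding (mild) care — and really the only place the argument could slip — is the completeness of this case analysis: one must ensure that the five value-sets are listed without omission and each matched to the correct $N(r)$, since it is precisely the absence of any $r$ with $N(r)\ge4$ among them that encodes the nontrivial content of (\ref{eq001}).
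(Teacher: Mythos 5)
Your proof is correct, but it follows a different route from the one the paper actually records. You treat Niven's Theorem in the form (\ref{eq001}) as a black box, reduce (i) to the irrationality of $\cos(\pi r)$ (which is the right and essentially trivial first step), and then do the elementary bookkeeping matching the five admissible values $\{-1,-\tfrac12,0,\tfrac12,1\}$ to $N(r)\in\{1,2,3\}$; your case analysis is complete and the converse direction ($N(r)\le 3$ forces one of the five values) is checked, so there is no gap. The paper, by contrast, only asserts in the introduction that Fact \ref{fac1} is an equivalent form of (\ref{eq001}), and its actual derivation appears in the remark following Lemma \ref{lem210}: there Fact \ref{fac1} falls out of the statement that $P_{N(r)}$ is the minimal polynomial of $2(-1)^{1+rN(r)}\cos(\pi r)$, so that $\cos(\pi r)$ has degree $p_{N(r)}$ over $\Q$, together with the observation that $p_n=1$ exactly when $n\in\{1,2,3\}$. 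What your approach buys is brevity and independence from the cyclotomic machinery of Section 2 -- at the cost of importing (\ref{eq001}) as an external input. What the paper's approach buys is self-containedness (it reproves Niven's Theorem rather than citing it) and, more importantly, a template that generalizes: the same degree-counting via $P_{N(r)}$ and $p_{N(r)}$ is exactly what drives the proof of Theorem \ref{thm2}, whereas the value-by-value bookkeeping you use does not extend to two trigonometric numbers.
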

 
Fact \ref{fac1} naturally leads to
the question whether it extends in any recognizable form to more than
one trigonometric number. This question does not seem to
have been adressed in the literature. The present article
provides a complete answer in the case of {\em two\/} trigonometric
numbers. However, the tools assembled in the process are applicable
also in the case of more than two numbers, to be studied elsewhere.

To see what an analogue of Fact
\ref{fac1} for two numbers $\cos(\pi r_1)$ and $\cos (\pi r_2)$ with $r_1,r_2\in
\Q$ might look like, note first that clearly those two numbers are
$\Q$-dependent whenever $r_1 - r_2$ or $r_1+r_2$ is an
integer. Moreover,
\begin{equation}\label{eq01}
2\cos (\pi /5 ) -  2\cos (  2 \pi / 5) = 1 \, ,
\end{equation}
so $1, \cos(\pi r_1)$, and $\cos(\pi r_2)$ may be
$\Q$-dependent if $N(r_1)= N(r_2)=5$. As the following theorem,
the main result of this article, shows, there are
no other obstacles to $\Q$-independence. Notice that the theorem is a true
analogue of Fact \ref{fac1} and immediately implies the latter.

\begin{theorem}\label{thm2}
Let $r_1,r_2\in \Q$ be such that neither $r_1 - r_2$ nor $r_1 + r_2$
is an integer. Then the following are equivalent:
\begin{enumerate}
\item The numbers $1$, $\cos(\pi r_1)$, and $\cos (\pi r_2)$ are
  $\Q$-independent;
\item $N(r_j)\ge 4$ for $j\in \{ 1,2\}$, and $\bigl( N(r_1), N(r_2)\bigr)\ne (5,5)$.
\end{enumerate}
\end{theorem}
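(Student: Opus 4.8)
The plan is to prove the two implications separately, reducing everything to the single question of when a nontrivial affine relation $\cos(\pi r_2)=a+b\cos(\pi r_1)$ with $a,b\in\Q$ can hold. The implication (i)$\Rightarrow$(ii) I would establish in contrapositive form, and it is elementary. If $N(r_j)\le 3$ for some $j$, then Fact~\ref{fac1} gives $\cos(\pi r_j)\in\Q$, so $1$ and $\cos(\pi r_j)$ are already $\Q$-dependent and hence so are the three numbers. If instead $N(r_1)=N(r_2)=5$, then both cosines lie in $\Q(\cos(\pi/5))=\Q(\sqrt 5)$, a two-dimensional $\Q$-vector space; three vectors $1,\cos(\pi r_1),\cos(\pi r_2)$ in a plane are necessarily dependent. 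This is, in effect, where \eqref{eq01} enters.

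For the converse (ii)$\Rightarrow$(i) I would argue by contradiction: assume $N(r_1),N(r_2)\ge 4$ and $(N(r_1),N(r_2))\ne(5,5)$, yet $a+b\cos(\pi r_1)+c\cos(\pi r_2)=0$ with $(a,b,c)\ne(0,0,0)$. Since $N(r_j)\ge4$ forces $\cos(\pi r_j)\notin\Q$ (Fact~\ref{fac1}), neither $b$ nor $c$ can vanish, so after rescaling the relation reads $\cos(\pi r_2)=a+b\cos(\pi r_1)$ with $b\ne 0$; in particular $K:=\Q(\cos(\pi r_1))=\Q(\cos(\pi r_2))$. The whole problem is thus reduced to deciding when two cosine generators of a common field are $\Q$-affinely related.

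The key computation is the degree of $K$. Writing $\cos(\pi r_j)=\tfrac12(\zeta_j+\zeta_j^{-1})$ with $\zeta_j=e^{i\pi r_j}$ a root of unity of order $M_j\in\{N(r_j),2N(r_j)\}$, one has $\Q(\cos(\pi r_j))=\Q(\zeta_{M_j})^+$, the maximal real subfield of the $M_j$-th cyclotomic field, of degree $\varphi(M_j)/2$. A short case analysis in $N=N(r_j)$ yields $[\,\Q(\cos(\pi r_j)):\Q\,]=\varphi(N)/2$ for $N$ odd and $\varphi(N)$ for $N$ even, so the degree equals $2$ exactly when $N\in\{4,5,6\}$, the three fields then being the \emph{distinct} quadratics $\Q(\sqrt2),\Q(\sqrt5),\Q(\sqrt3)$. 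Since the relation forces $K_1=K_2$ of common degree $d$, the case $d=2$ can occur only for $N(r_1)=N(r_2)\in\{4,5,6\}$. Here $(5,5)$ is excluded by hypothesis (ii), while for $(4,4)$ and $(6,6)$ the only cosine values occurring are $\pm\tfrac{\sqrt2}{2}$, respectively $\pm\tfrac{\sqrt3}{2}$, so $\cos(\pi r_1)=\pm\cos(\pi r_2)$ and hence $r_1-r_2\in\Z$ or $r_1+r_2\in\Z$, contradicting the standing hypothesis. This disposes of $d\le 2$.

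The remaining case $d\ge 3$ is the main obstacle. My approach is to reduce both generators to a single one: passing to the conductor $f$ of $K$ and the generator $u=\cos(2\pi/f)$, each cosine generator is $\pm\cos(2\pi j/f)=\pm T_j(u)$ for the Chebyshev polynomial $T_j$, so an affine relation becomes $T_{j_2}(u)=a+bT_{j_1}(u)$, i.e.\ the minimal polynomial of $u$ (degree $d$) divides $T_{j_2}-bT_{j_1}-a$. Since the $T_j$ have pairwise distinct degrees and are $\Q$-linearly independent, this cannot happen for $j_1\ne j_2$ once the relevant degrees stay below $d$, and the remaining boundary cases (a Chebyshev degree equal to $d$) must be cleared by hand using the explicit shape of the minimal polynomial of $2\cos$. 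I expect this bookkeeping to be the genuinely technical heart of the proof. An equivalent formulation, and the one that should extend to more than two cosines as the abstract promises, is to clear denominators and read the relation as a vanishing $\Q$-combination $2a+b\zeta_1+b\zeta_1^{-1}+c\zeta_2+c\zeta_2^{-1}=0$ of at most five roots of unity, then invoke the classification of minimal vanishing sums (rotations of complete $p$-th-root sums, $p$ prime): with only five terms and the symmetric coefficient pattern, only $p\in\{2,3,5\}$ can contribute, the $p=2,3$ parts corresponding exactly to the degenerate cases $N(r_j)\le 3$ or $r_1\pm r_2\in\Z$, and a genuine $p=5$ sum forcing precisely $N(r_1)=N(r_2)=5$.
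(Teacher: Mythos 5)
Your handling of (i)$\Rightarrow$(ii) and of the degree-$2$ situation is sound and broadly parallels the paper, which likewise disposes of $N(r_j)\in\{4,5,6\}$ by a separate short argument at the end of its proof. The genuine gap is in the case $d\ge 3$, which is where all the content lies. Your Chebyshev reduction turns the relation into the assertion that the minimal polynomial of $u=\cos(2\pi/f)$, of degree $d=\varphi(f)/2$, divides $T_{j_2}-bT_{j_1}-a$; but the admissible indices $j$ are the residues coprime to $f$ in $[1,f/2)$, so $\max(j_1,j_2)$ is typically of order $f/2$, which exceeds $d$ whenever $f$ is composite (for $f=2^k$ one has $j$ as large as $2^{k-1}-1$ against $d=2^{k-2}$). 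Hence ``the relevant degrees stay below $d$'' is simply false in general, and once $\deg(T_{j_2}-bT_{j_1}-a)\ge d$ the divisibility question is no longer a degree count: it is verbatim the original question of whether $T_{j_2}(u)=a+bT_{j_1}(u)$, so the reduction is circular there. Even for prime conductor, where $j\le(p-1)/2=d$, the ``boundary case'' $j_2=d$ is not an edge case but the generic one; it asks whether $T_d-bT_{j_1}-a$ is $2^{d-1}$ times the minimal polynomial, and deciding this requires exactly the explicit coefficient information that the paper assembles (the four leading coefficients of $\Phi_{2N}$ in Lemmas \ref{lem22} and \ref{lem23}, the constant term via Lemma \ref{lem25}) and then exploits through the overdetermined system (\ref{eq33}) for $(s,t)$. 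In short, you have deferred rather than solved the heart of the proof. A smaller unaddressed point: writing both cosines as $\pm T_j(u)$ for a single $u$ presupposes that $\Q\bigl(e^{2\pi\imath/M_1}\bigr)^+=\Q\bigl(e^{2\pi\imath/M_2}\bigr)^+$ forces $M_1=M_2$ up to a factor of $2$; this is true but needs the conductor computation.

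Your closing suggestion --- reading the relation as a vanishing sum $2a+b\zeta_1+b\zeta_1^{-1}+c\zeta_2+c\zeta_2^{-1}=0$ of at most five roots of unity --- is in fact a viable and genuinely different route from the paper's coefficient comparison, and the more promising one for the extension to several cosines. As stated, however, it rests on a misquoted classification: it is \emph{not} true that every minimal vanishing sum of roots of unity is a rotation of a complete set of $p$-th roots; the first counterexamples have six terms, e.g.\ the sum underlying $\cos(\pi/3)-\cos(\pi/5)+\cos(2\pi/5)=0$, which is thematically a warning for precisely this problem. What is true, by the Conway--Jones classification (or by Mann's theorem combined with the Lam--Leung length constraints), is that minimal vanishing sums of length at most five are rotations of the complete $p$-th-root sums with $p\in\{2,3,5\}$; with that correctly invoked, the possible partitions of the five terms ($5$, $3+2$, and $2+2$ when $a=0$, together with the coincidences among the $\zeta$'s) do lead, after a case analysis you have not carried out, exactly to $N(r_1)=N(r_2)=5$, to some $N(r_j)\le 3$, or to $r_1\pm r_2\in\Z$. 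So the second route can be completed, but neither route is complete as written, and the first cannot be completed by the degree argument you propose.
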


\noindent
A proof of Theorem \ref{thm2} is given in Section \ref{sec3}. There it
will also be seen that the three numbers in (i) often are linearly independent even
over larger number fields. The proof relies on several elementary facts regarding cyclotomic polynomials that may be of
independent interest. For the reader's convenience, Section
\ref{sec2} recalls these facts, or establishes them in cases where no
reference is known to the author.

As an amusing application of Theorem \ref{thm2} and the tools
assembled for its proof, the final Section \ref{sec4} of this article
provides a complete classification of planar triangles of a certain
type. Concretely, let $0< \delta_1 \le \delta_2 \le \delta_3 < \pi$ be
the angles of (the similarity type of) a non-degenerate planar
triangle $\Delta$, in symbols $\Delta = (\delta_1, \delta_2,
\delta_3)$ with $\delta_1 + \delta_2 +\delta_3 = \pi$. Call $\Delta$
{\em rational\/} if $\delta_j/\pi \in \Q$ for all $j$. (Several other,
non-equivalent variants of the term ``rational triangle'' can be found in the
literature \cite{wiki1}; they will not be employed here.) Given any
rational triangle $\Delta$, denote by $N(\Delta)$ the
least common multiple of the numbers $N(\delta_j/\pi)$; with this,
$\Delta = \pi  (n_1,n_2,n_3)/N(\Delta)$, where the positive integers
$n_j= \delta_j N(\Delta)/\pi$ have no common factor. Considering a concrete
realization of $\Delta$, for each $j$ let $\ell_j$ be the length of
the side vis-\`a-vis the angle $\delta_j$. While these lengths
are determined by
$\Delta=(\delta_1,\delta_2,\delta_3)$ only up to scaling, i.e., up to
simultaneous multiplication by a positive factor, all ratios
$\ell_j/\ell_k$ are uniquely determined since, by the law of sines,
$$
\ell_1 : \ell_2 : \ell_3 = \sin \delta_1 : \sin \delta_2 : \sin
\delta_3 \, ;
$$
here and throughout, an equality $x_1:x_2:x_3 = y_1:y_2:y_3$ with
real numbers $x_j,y_j>0$ is understood to mean that both
$x_1/x_2 = y_1/y_2$ and $x_2/x_3 = y_2/y_3$. Notice that usage of
an expression $x_1:x_2:x_3$ does not automatically imply that $x_1/x_2
= x_2/x_3$, i.e., the numbers $x_1$, $x_2$, and $x_3$ need not be in
``continued proportion''.

Arguably the simplest triangles are those for which $\ell_1 : \ell_2 :\ell_3 = r_1:r_2:r_3$ with
rational numbers $0<r_1 \le r_2 \le r_2$; clearly, all $r_j$ can be
assumed to be integers in this case. For example,
$\ell_1:\ell_2:\ell_3 = 3:4:5$ is the first
Pythagorean right triangle. It is well known, however, that for {\em
  rational\/} triangles this simple situation occurs only in the
equilateral case. More formally, if $\Delta$ is rational with $\ell_1
:\ell_2 :\ell_3 = r_1:r_2:r_3$ then $r_1 = r_2 = r_3$ and $\Delta =
\pi (1,1,1)/3$; e.g., see \cite[Cor.6]{Calcut09} or
\cite[p.228]{ConwayGuy}. To identify a slightly wider class of triangles that
may rightfully be considered ``simple'', in the
spirit of the charming article \cite{Calcut10} call 
$\Delta$ a {\em high school triangle\/} if $\ell_1 : \ell_2 :\ell_3 =
x_1:x_2:x_3$ where each $x_j$ is either a rational number or a (real)
quadratic irrational, i.e., $x_j = r_j + s_j\sqrt{d_j}$ with
$r_j,s_j\in \Q$ and integers $d_j\ge 2$. Again, all $r_j$ and $s_j$ can
be assumed to be integers. Thus, informally put, a high school
triangle can be realized in such a way that each side-length is expressed using only integers and at most one square-root
symbol. For instance, $\Delta = \pi
(1,1,2)/4$ is both rational and a high school triangle since $\ell_1 :
\ell_2 :\ell_3 = 1:1:\sqrt{2}$. Observe that this $\Delta$ is also
right and hence an example of a right, rational high school
triangle. As demonstrated in \cite{Calcut10}, there exist altogether only two other triangles
of this nature, namely
$$
\Delta =  \pi (1,2,3) /6 \quad \mbox{\rm and} \quad
\Delta =  \pi (1,5,6) / 12 \, .
$$
Thus, within the countably infinite family of all right, rational
triangles, only three are high school triangles. This remarkable
scarcity prevails even without the assumption of a right angle: Very few
rational triangles are high school triangles.
Utilizing Theorem \ref{thm2} and the tools employed in its proof, it
is straightforward to demonstrate this rigorously, by establishing the following {\em
  complete classification of rational high school triangles\/}
which will be discussed in detail in Section \ref{sec4}.

\begin{theorem}\label{thm3}
Let $\Delta$ be a rational triangle. Then the
following are equivalent:
\begin{enumerate}
\item $\Delta$ is a high school triangle;
\item $N(\Delta)\in \{3,4,5,6,12\}$.
\end{enumerate}
\end{theorem}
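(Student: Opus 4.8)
The plan is to prove the two implications by quite different means: (ii)$\Rightarrow$(i) by an explicit construction, and (i)$\Rightarrow$(ii) by extracting a field-theoretic necessary condition from the law of cosines and then feeding it into two lemmas. Throughout write $\theta = \pi/N$ with $N=N(\Delta)$, so that $\Delta = \pi(n_1,n_2,n_3)/N$ and, by the law of sines, $\ell_1:\ell_2:\ell_3 = \sin(n_1\theta):\sin(n_2\theta):\sin(n_3\theta)$, all three of which lie in the real cyclotomic field $F:=\Q(\cos(\pi/N))$ of degree $\varphi(2N)/2$.

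For (ii)$\Rightarrow$(i) I would argue as follows. When $N\in\{3,4,5,6\}$ one has $[F:\Q]\le 2$, so $F$ is either $\Q$ or a single real quadratic field; normalising the realization so that $\ell_1=1$ puts every ratio $\ell_j/\ell_1\in F$ into one and the same quadratic field, whence $\Delta$ is automatically a high school triangle. The only genuine case is $N=12$, where $F=\Q(\sqrt 2,\sqrt 3)$ has degree $4$. For each of the finitely many triangles with $N(\Delta)=12$ one exhibits a scaling factor $\lambda$ (morally, clearing a common $\sqrt 2$) after which the three side lengths land \emph{individually} in $\Q$, $\Q(\sqrt 2)$, $\Q(\sqrt 3)$, or $\Q(\sqrt 6)$; e.g.\ $(1,5,6)$ gives $(\sqrt 3-1):(\sqrt 3+1):2\sqrt 2$ and $(1,4,7)$ gives $(\sqrt 3-1):\sqrt 6:(\sqrt 3+1)$. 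This is a finite verification, and it is exactly here that a degree-$4$ multiquadratic field still permits a high school realization by \emph{distributing} the square roots over several quadratic fields.

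For (i)$\Rightarrow$(ii) the key observation is that high school status forces all three \emph{cosines} into one multiquadratic field: if $\ell_j\in\Q(\sqrt{d_j})$, then $\cos\delta_k = (\ell_i^2+\ell_j^2-\ell_k^2)/(2\ell_i\ell_j)\in \Q(\sqrt{d_1},\sqrt{d_2},\sqrt{d_3})$, so each $\cos\delta_j=\cos(\pi n_j/N)$ lies in a multiquadratic field. I would then invoke two lemmas. The first is a single-angle criterion: \emph{$\cos(\pi r)$ lies in a multiquadratic field if and only if $N(r)\in\{1,2,3,4,5,6,12\}$}. This is the natural companion of Fact~\ref{fac1} in the present circle of ideas; it is proved by the cyclotomic-polynomial tools of Section~\ref{sec2} and the degree computations underlying Theorem~\ref{thm2}, showing $[\Q(\cos\pi r):\Q]=\varphi(2N(r))/2$ and that $\mathrm{Gal}(\Q(\cos\pi r)/\Q)\cong(\Z/2N(r)\Z)^{\times}/\{\pm1\}$ has exponent $2$ precisely for those values of $N(r)$. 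The second is a combinatorial realizability lemma: \emph{if $0<\delta_1\le\delta_2\le\delta_3<\pi$ are the angles of a triangle with every $N(\delta_j/\pi)\in\{2,3,4,5,6,12\}$, then $N(\Delta)=\mathrm{lcm}_j N(\delta_j/\pi)\in\{3,4,5,6,12\}$}. Here the engine is the identity $\delta_1+\delta_2+\delta_3=\pi$, which forces the third denominator to belong to the same set and thereby eliminates the otherwise possible least common multiples such as $\mathrm{lcm}(4,5)=20$, $\mathrm{lcm}(3,5)=15$, and $\mathrm{lcm}(5,6)=30$ (in each case the third angle has denominator $20$, $15$, or $30$). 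Combining the two lemmas gives (ii).

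I expect the main obstacle to be the \emph{only-if} half of the single-angle criterion, i.e.\ proving that $\cos(\pi r)$ is \emph{not} multiquadratic whenever $N(r)\notin\{1,2,3,4,5,6,12\}$: this is a statement about an infinite family and requires a uniform structural argument on $(\Z/2M\Z)^{\times}/\{\pm1\}$ (ruling out any odd prime $\ge 7$, bounding the powers of $2$, $3$, and $5$, and excluding the cyclic-of-order-$4$ quotients occurring for $M=8,10,15$), which is precisely the kind of cyclotomic input assembled in Section~\ref{sec2}. The realizability lemma, while elementary, is the other delicate point, as its completeness rests on a careful finite case analysis of admissible angle-denominator triples; the pleasant feature exploited there is that $\delta_3=\pi-\delta_1-\delta_2$ almost always throws the third denominator outside $\{2,3,4,5,6,12\}$, leaving only the configurations that assemble into $N(\Delta)\in\{3,4,5,6,12\}$.
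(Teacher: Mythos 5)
Your plan is sound and, for the implication (i)$\Rightarrow$(ii), genuinely different from the paper's. The paper also starts from the law of cosines, but extracts only the weaker conclusion that $\cos(\pi r_j)$ has degree $1$, $2$, $4$, or $8$ over $\Q$, which leaves the fourteen candidate denominators in (\ref{eq42}); it then spends most of Section \ref{sec4} answering Question \ref{qes1} --- factoring the minimal polynomials $Q_N$ over every biquadratic field (Figure \ref{fig2}) and checking, for each of the $105$ pairs $\bigl(N(r_1),N(r_2)\bigr)$, whether the sine ratio can lie in such a field (Figure \ref{fig3}) --- before assembling admissible triples. You instead use the full strength of the cosine condition: $\cos(\pi r_j)$ lies in the multiquadratic field $\Q\bigl(\sqrt{d_1},\sqrt{d_2},\sqrt{d_3}\bigr)$, and since $\Q\bigl(\cos(\pi r_j)\bigr)$ is the real subfield of the $2N(r_j)$-th cyclotomic field, abelian over $\Q$ with group $(\Z/2N(r_j)\Z)^{\times}/\{\pm 1\}$, membership in a multiquadratic field forces that group to have exponent $2$, i.e.\ $N(r_j)\in\{2,3,4,5,6,12\}$; the identity $\delta_1+\delta_2+\delta_3=\pi$ then eliminates the stray least common multiples. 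This trades the paper's $105$-case computation for one Galois-theoretic lemma plus a short congruence check, at the cost of not producing the finer information of Figure \ref{fig3} (which sine ratios are biquadratic), which the paper treats as having independent interest; both arguments reduce (ii)$\Rightarrow$(i) to the same finite verification recorded in Figure \ref{fig1}. Two repairs are needed before your sketch is watertight. First, the individual sines $\sin(n_j\pi/N)$ do \emph{not} lie in $F=\Q\bigl(\cos(\pi/N)\bigr)$ --- e.g.\ $\sin(\pi/5)$ has degree $4$ over $\Q$ while $[F:\Q]=2$ for $N=5$ --- only their \emph{ratios} do, via $\sin(a\theta)/\sin(b\theta)=U_{a-1}(\cos\theta)/U_{b-1}(\cos\theta)$ with $U_m$ the Chebyshev polynomials of the second kind; it is this corrected statement that your degree-$\le 2$ argument for $N(\Delta)\in\{3,4,5,6\}$ actually requires. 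Second, your realizability lemma must also exclude $N(\Delta)\in\{2,10,60\}$, not only $15$, $20$, and $30$; these cases succumb to the same parity and congruence arguments, but they belong on the list.
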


\noindent
A brief counting exercise reveals that condition (ii) in
Theorem \ref{thm3} identifies exactly 14 different triangles, of which
seven are isosceles (including the equilateral as well as the right triangle
$ \pi (1,1,2)/4$ from above),
three are right (and hence precisely the ones found in \cite{Calcut10}), and five are neither; see Figure \ref{fig1}.

\begin{figure}[ht]
%
\psfrag{t1a}[]{\small $\pi (1,1,1)/3$}
\psfrag{t1b}[]{\small $1:1:1$}
\psfrag{t2a}[]{\small $\pi (1,1,2)/4$}
\psfrag{t2b}[]{\small $1:1:\sqrt{2}$}
\psfrag{t3a}[]{\small $\pi (1,1,3)/5$}
\psfrag{t3b}[]{\small $2:2:\sqrt{5} + 1$}
\psfrag{t4a}[]{\small $\pi (1,2,2)/5$}
\psfrag{t4b}[]{\small $\sqrt{5}-1:2:2$}
\psfrag{t5a}[]{\small $\pi (1,1,4)/6$}
\psfrag{t5b}[]{\small $1:1:\sqrt{3}$}
\psfrag{t6a}[]{\small $\pi (1,1,10)/12$}
\psfrag{t6b}[]{\small $\sqrt{2}:\sqrt{2}:\sqrt{3}+1$}
\psfrag{t7a}[]{\small $\pi (2,5,5)/12$}
\psfrag{t7b}[]{\small $\sqrt{3} -1:\sqrt{2}:\sqrt{2}$}
\psfrag{t8a}[]{\small $\pi (1,2,3)/6$}
\psfrag{t8b}[]{\small $1:\sqrt{3}:2$}
\psfrag{t9a}[]{\small $\pi (1,2,9)/12$}
\psfrag{t9b}[]{\small $\sqrt{3}-1:\sqrt{2}:2$}
\psfrag{t10a}[]{\small $\pi (1,3,8)/12$}
\psfrag{t10b}[]{\small $\sqrt{3} - 1 : 2 :\sqrt{6}$}
\psfrag{t11a}[]{\small $\pi (1,4,7)/12$}
\psfrag{t11b}[]{\small $\sqrt{3} -1 : \sqrt{6}:  \sqrt{3} + 1$}
\psfrag{t12a}[]{\small $\pi (1,5,6)/12$}
\psfrag{t12b}[]{\small $\sqrt{3}-1 :\sqrt{3}+1 :2\sqrt{2}$}
\psfrag{t13a}[]{\small $\pi (2,3,7)/12$}
\psfrag{t13b}[]{\small $\sqrt{2}:2:\sqrt{3}+1$}
\psfrag{t14a}[]{\small $\pi (3,4,5)/12$}
\psfrag{t14b}[]{\small $2 :\sqrt{6}:\sqrt{3} + 1$}
\includegraphics{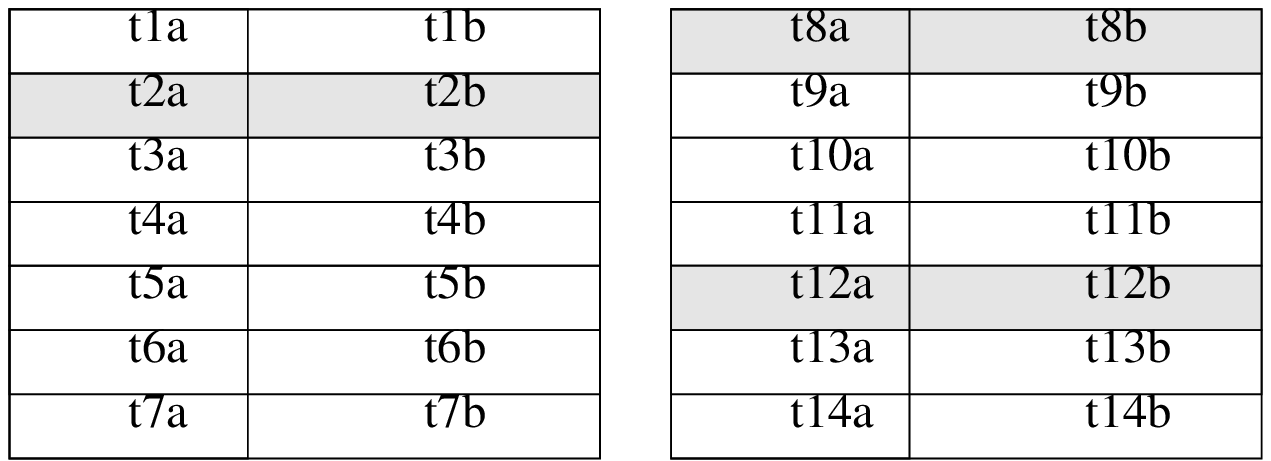}
\caption{As a consequence of Theorem \ref{thm3}, there exist exactly 14
  rational high school triangles, of which seven are isosceles (left
  table), three are right (grey boxes; cf.\ \cite{Calcut10}), and five
  are neither; see Section \ref{sec4} for details.}\label{fig1}
\end{figure}

\begin{rem}
In elementary geometry, angles are often measured in degrees ($^{\circ}$) rather
than radians. Rationality in this context simply means that all angles
are rational numbers, and Theorem \ref{thm3} is readily
reformulated: When measured in degrees, a rational triangle is a
high school triangle if and only if each angle is an integer multiple
of either $15^{\circ}$ or $36^{\circ}$.
\end{rem}

\section{Cyclotomic and other polynomials}\label{sec2}

Denote the sets of all positive integers and all integers by $\N$ and
$\Z$, respectively. For every $n\in \N$ let $\Phi_n = \Phi_n(z)$ be the
$n$-th cyclotomic polynomial,
\begin{equation}\label{eq21}
\Phi_n(z) = \prod\nolimits_{1\le j \le n: {\rm gcd}(j,n) = 1} \left(
z- e^{2\pi \imath j/n}
\right) \, ;
\end{equation}
thus for example
$$
\Phi_1(z) = z - 1 \, , \quad
\Phi_2(z) = z + 1 \, , \quad
\Phi_3(z) = z^2 + z + 1 \,  , \quad
\Phi_4(z) = z^2 + 1 \, .
$$
It is well known that each $\Phi_n$ is monic with integer
coefficients, is irreducible over $\Q$, and has degree $\varphi(n)$, where
$\varphi$ denotes the Euler totient function. For $n\ge 2$
the polynomial $\Phi_n$ is also palindromic, i.e., $\Phi_n(z^{-1}) =
z^{-\varphi (n)} \Phi_n(z)$. The coefficients of $\Phi_n$ are
traditionally labelled $a(j,n)$, thus
\begin{equation}\label{eq21a}
\Phi_n(z) = \sum\nolimits_{j=0}^{\varphi(n)} a(j,n) z^{\varphi(n) - j}
\, ;
\end{equation}
in addition, let $a(j,n)=0$ whenever $j>\varphi(n)$, so that $a(j,n)$ is
defined for all $n\in \N$ and $j\ge 0$. (For later convenience, the
labelling in (\ref{eq21a}) is a reversal of the traditional
one.) The integers $a(j,n)$ are objects of
great combinatorial interest and have been studied extensively; e.g., see
\cite{Bachman93} and the references therein. Only a few specific properties of
cyclotomic polynomials are needed for the purpose of this article and will
now be reviewed; for comprehensive accounts the reader is referred, e.g.,
to \cite[Ch.V.8]{Hung74}, \cite[\S 13]{Kunz}, or \cite[\S 11]{Stroth}.

First observe that while the values of $|a(j,n)|$ may be large for
large $n$ and the appropriate $j$, the four leading coefficients of $\Phi_n$
only attain values in $\{-1,0,1\}$, and in fact exhibit patterns that are even more
restricted. To state this precisely, call an integer $k$ {\em
squarefree\/} if $p^2 \nmid  k$, i.e., $k$ is not divisible by $p^2$,
for any prime number $p$.

\begin{lem}\label{lem22}
Assume $n\in \N$ is squarefree. Then $a(0,n)=1$, and the coefficient triple
$\bigl( a(1,n), a(2,n), a(3,n)\bigr)$ has exactly one of the following
eight values:
$$
(1,1,1), \:
(1,1,0) , \:
(1,0,0), \:
(1,0,-1) , \:
(-1,1,0), \:
(-1,1,-1) , \:
(-1,0,1), \:
(-1,0,0) \, .
$$
\end{lem}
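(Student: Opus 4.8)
The plan is to reduce the claim to a short computation of the lowest-order coefficients of $\Phi_n$, and then to split into a handful of cases governed by the sign $\mu(n)$ and by divisibility of $n$ by $2$ and $3$. That $a(0,n)=1$ is immediate from monicity. For $n=1$ one has $\Phi_1(z)=z-1$, giving the triple $(-1,0,0)$, so I would assume henceforth $n\ge 2$. Since $\Phi_n$ is then palindromic, the defining relation (\ref{eq21a}) shows that $a(j,n)$ equals the coefficient of $z^{j}$ in $\Phi_n$ for every $j\ge 0$ (the convention $a(j,n)=0$ for $j>\varphi(n)$ matching $\deg \Phi_n=\varphi(n)$). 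Hence it suffices to determine $\Phi_n(z)$ modulo $z^4$.

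The next step is to invoke the Möbius product formula. From $z^n-1=\prod_{d\mid n}\Phi_d(z)$ and Möbius inversion one obtains $\Phi_n(z)=\prod_{d\mid n}(z^d-1)^{\mu(n/d)}$; writing $z^d-1=-(1-z^d)$ and using $\sum_{d\mid n}\mu(n/d)=\sum_{e\mid n}\mu(e)=0$ for $n\ge 2$, the global sign is $+1$, so
$$
\Phi_n(z)=\prod\nolimits_{d\mid n}(1-z^d)^{\mu(n/d)} \, .
$$
Working modulo $z^4$, every factor with $d\ge 4$ is $\equiv 1$, so only the divisors $d\in\{1,2,3\}$ survive and
$$
\Phi_n(z)\equiv (1-z)^{\mu(n)}\,(1-z^2)^{\mu(n/2)}\,(1-z^3)^{\mu(n/3)}\pmod{z^4} \, ,
$$
with the convention $\mu(n/p)=0$ (a trivial factor) whenever $p\nmid n$.

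The key simplification is that, because $n$ is squarefree, $\mu(n/2)=-\mu(n)$ whenever $2\mid n$ and $\mu(n/3)=-\mu(n)$ whenever $3\mid n$. Thus all three exponents are pinned down by the sign $\mu(n)\in\{-1,1\}$ together with the two Boolean data $2\mid n$ and $3\mid n$. Expanding $(1-z)^{\pm1}$, $(1-z^2)^{\pm1}$, and $(1-z^3)^{\pm1}$ modulo $z^4$ and multiplying out in each of the $2\times2\times2$ cases is a routine finite computation; it produces eight triples $\bigl(a(1,n),a(2,n),a(3,n)\bigr)$ that one checks to be precisely the eight listed values. (For example, only the first factor contributes to the linear term, giving $a(1,n)=-\mu(n)$ throughout, which fixes the sign of the first entry.)

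I expect no genuine obstacle here; the only points demanding care are the sign bookkeeping — confirming that the prefactor in the Möbius formula is $+1$ exactly because $n\ge 2$, and reading off $\mu(n/p)=-\mu(n)$ correctly — and verifying that the degenerate small cases with $\varphi(n)<3$ (such as $n\in\{2,3,6\}$) are already absorbed by the power-series truncation together with the convention $a(j,n)=0$ for $j>\varphi(n)$.
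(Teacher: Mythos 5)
Your proof is correct, but it follows a genuinely different route from the paper's. You pass to the \emph{low}-order coefficients via palindromicity and then read off $\Phi_n$ modulo $z^4$ from the M\"{o}bius product $\Phi_n(z)=\prod_{d\mid n}(1-z^d)^{\mu(n/d)}$, so that only the divisors $d\in\{1,2,3\}$ survive and the triple is determined by $\mu(n)$ together with whether $2\mid n$ and $3\mid n$; the eight resulting cases give exactly the eight listed triples. The paper instead works directly with the \emph{leading} coefficients, writing $n=p_1\cdots p_m$ and inducting on $m$ via $\Phi_{p_1\cdots p_jp_{j+1}}(z)=\Phi_{p_1\cdots p_j}(z^{p_{j+1}})/\Phi_{p_1\cdots p_j}(z)$, with long division producing a recursion for $(a_j,b_j,c_j)$ and separate adjustments when the smallest prime factor is $3$ or $2$. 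Your argument is shorter, avoids the case-by-case recursion, and makes the dependence of the triple on $\bigl(\mu(n),\,2\mid n,\,3\mid n\bigr)$ completely transparent (it also generalizes at once to any fixed number of leading coefficients by truncating at a higher power of $z$); its only extra ingredients are the palindromic symmetry $a(j,n)=a(\varphi(n)-j,n)$ for $n\ge2$ and the sign check in the M\"{o}bius formula, both of which you handle, along with the separate treatment of $n=1$ (where $\Phi_1$ is anti-palindromic) and the small squarefree $n$ with $\varphi(n)<3$, which are indeed absorbed correctly by the convention $a(j,n)=0$ for $j>\varphi(n)$. The paper's inductive computation does have the side benefit of exhibiting the alternating pattern $(1,1,1)\leftrightarrow(-1,0,0)$ for products of primes $\ge5$ and of setting up notation reused in the proof of Lemma 2.3, but as a verification of the stated lemma your approach is equally rigorous and arguably cleaner.
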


\begin{proof}
By (\ref{eq21}), clearly $a(0,n)=1$ for all $n$. The cases of $n=1$,
$2$, and $3$ yield the
triples $(-1,0,0)$, $(1,0,0)$, and $(1,1,0)$, respectively, all of
which are listed in the statement of the lemma. Hence
assume $n\ge 5$ from now on. Since $n$
is squarefree, there exist $m\in \N$ and prime numbers $p_1> \ldots >
p_m$ such that $n= \prod_{j=1}^m p_j$. 

Assume first that $p_m \ge 5$, and for convenience let $\varphi_j  =
\varphi(p_1 \cdots p_j)$ as well as $a_j= a(1, p_1 \cdots p_j)$,
$b_j= a(2,p_1 \cdots p_j)$, and $c_j = a(3,p_1 \cdots p_j)$ for $j \in
\{ 1, \ldots , m\}$. Thus
$$
\Phi_{p_1 \cdots p_j}(z) = z^{\varphi_j} + a_j z^{\varphi_j - 1} + b_j
z^{\varphi_j - 2} + c_j z^{\varphi_j - 3} + \Psi_j(z) \, ,
$$
with the appropriate polynomial $\Psi_j$ of degree less than $\varphi_j
- 3$. From
$$
\Phi_{p_1}(z) = z^{p_1-1} + z^{p_1 - 2} + \ldots + z + 1 \, ,
$$
it is clear that $\varphi_1=p_1 - 1$ and $a_1 = b_1 = c_1 = 1$. On the other hand,
\begin{align*}
\Phi_{p_1 \cdots p_j p_{j+1}}(z) & = \frac{\Phi_{p_1 \cdots
    p_j}(z^{p_{j+1}})}{\Phi_{p_1 \cdots p_j}(z)} \\
& = \frac{z^{p_{j+1}\varphi_j} + a_j z^{p_{j+1}(\varphi_j - 1) } + b_j z^{p_{j+1}(\varphi_j -
    2)} + c_j z^{p_{j+1}(\varphi_j - 3)} + \Psi_j
  (z^{p_{j+1}})}{z^{\varphi_j} + a_j z^{\varphi_j - 1} + b_j
  z^{\varphi_j - 2} + c_j z^{\varphi_j - 3} + \Psi_j(z)} \, ,
\end{align*}
which, together with long division and the fact that $p_{j+1}\ge 5$, leads to
\begin{align*}
\Phi_{p_1 \cdots p_j p_{j+1}}(z) & = z^{  (p_{j+1} - 1) \varphi_j } - a_j
z^{ (p_{j+1}-1) \varphi_j -1} +
(a_j^2 - b_j) z^{  (p_{j+1} - 1) \varphi_j - 2} +\\
& \quad + 
(2a_jb_j - a_j^3 - c_j) z^{  (p_{j+1}-1) \varphi_j -3} + \Psi_{j+1} (z)
\, ,
\end{align*}
and hence in turn yields the recursion $\varphi_{j+1} = (p_{j+1} -
1)\varphi_j$ and
\begin{equation}\label{eq221}
a_{j+1} = - a_j \, , \quad
b_{j+1} = a_j^2 - b_j \, , \quad
c_{j+1} = 2 a_jb_j  - a_j^3 - c_j \, .
\end{equation}
Using (\ref{eq221}) with $(a_1,b_1,c_1)=(1,1,1)$ shows that the
triple $(a_j,b_j,c_j)$ can have only two different values, namely
$(1,1,1)$ if $j$ is odd, and $(-1,0,0)$ if $j$ is even. 

Next assume that $p_m = 3$ and hence $m\ge 2$. In this case,
(\ref{eq221}) remains valid for $j \in \{ 1, \ldots , m-2\}$, yet for $j=m-1$
it has to be replaced with
\begin{equation}\label{eq222}
a_{m} = - a_{m-1} \, , \quad
b_{m} = a_{m-1}^2 - b_{m-1} \, , \quad
c_{m} = 2 a_{m-1}b_{m-1} + a_{m-1} - a_{m-1}^3 - c_{m-1}\, .
\end{equation}
Recall from above that $(a_{m-1}, b_{m-1}, c_{m-1})$ equals either
$(1,1,1)$ or $(-1,0,0)$. By (\ref{eq222}), therefore, the value of
$(a_m,b_m,c_m)$ is either $(-1,0,1)$ or $(1,1,0)$.

Finally, if $p_m = 2$ then again $m\ge 2$, and the identity 
$$
\Phi_n(z) = \Phi_{p_1 \cdots p_{m-1} 2} (z) = \Phi_{p_1 \cdots p_{m-1}} (-z)
$$
implies that $a_m = - a_{m-1}$, $b_m = b_{m-1}$, and $c_m =
-c_{m-1}$. This yields the remaining four possible values for $(a_m,b_m,c_m)$.
\end{proof}

From (\ref{eq21}), it is easy to see that $\Phi_{mn} (z) = \Phi_n
(z^m)$, provided that every prime number
dividing $m$ also divides $n$. With this, Lemma \ref{lem22} restricts the possible
values for the leading coefficients of $\Phi_n$ even in cases where $n$ is
not squarefree.

\begin{lem}\label{lem23}
Assume $n\in \N$ is not squarefree. Then $a(0,n)=1$, and the coefficient triple
$\bigl( a(1,n), a(2,n), a(3,n)\bigr)$ has exactly one of the following
five values:
$$
(0,1,0), \:
(0,0,1) , \:
(0,0,0), \:
(0,0,-1), \:
(0,-1, 0) \, .
$$
\end{lem}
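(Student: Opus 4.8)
The plan is to reduce the claim to the squarefree case already settled in Lemma~\ref{lem22}, using the substitution identity $\Phi_{mn}(z)=\Phi_n(z^m)$ recorded immediately above. Since $n$ is not squarefree, I would factor $n=m\,n_0$, where $n_0$ denotes the radical of $n$ (the product of the distinct primes dividing $n$) and $m=n/n_0$. Writing $n=\prod_i p_i^{e_i}$ gives $n_0=\prod_i p_i$ and $m=\prod_i p_i^{e_i-1}$; because some $e_i\ge 2$ one has $m\ge 2$, and every prime dividing $m$ divides $n_0$. Hence the stated identity applies (with $n_0$ in place of $n$) and yields
$$
\Phi_n(z)=\Phi_{n_0}(z^m)\,,
$$
where now $n_0$ is squarefree.

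Next I would expand this substitution. Setting $\varphi_0=\varphi(n_0)=\deg\Phi_{n_0}$ and using (\ref{eq21a}),
$$
\Phi_n(z)=\Phi_{n_0}(z^m)=\sum\nolimits_{j=0}^{\varphi_0} a(j,n_0)\,z^{m(\varphi_0-j)}\,,
$$
so $\Phi_n$ has degree $m\varphi_0=\varphi(n)$ and its nonzero coefficients occur only at exponents that are multiples of $m$. In particular $a(0,n)=a(0,n_0)=1$, as required. The three coefficients $a(1,n)$, $a(2,n)$, $a(3,n)$ are by definition the coefficients of $z^{m\varphi_0-1}$, $z^{m\varphi_0-2}$, $z^{m\varphi_0-3}$, and each can be nonzero only if the respective exponent is divisible by $m$. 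Moreover, by Lemma~\ref{lem22} applied to the squarefree $n_0$ one has $a(1,n_0)\in\{1,-1\}$, and I expect this to be the only coefficient of $\Phi_{n_0}$ that actually enters the conclusion.

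The proof then concludes with a short case distinction on $m$. If $m\ge 4$, none of the exponents $m\varphi_0-1$, $m\varphi_0-2$, $m\varphi_0-3$ is a multiple of $m$ — the largest multiple of $m$ below $m\varphi_0$ being $m(\varphi_0-1)=m\varphi_0-m$ — so the triple is $(0,0,0)$. If $m=2$, the only surviving term among the three sits at $z^{m\varphi_0-2}=z^{2(\varphi_0-1)}$ and equals $a(1,n_0)$, giving $(0,a(1,n_0),0)$. If $m=3$, the only surviving term sits at $z^{m\varphi_0-3}=z^{3(\varphi_0-1)}$ and equals $a(1,n_0)$, giving $(0,0,a(1,n_0))$. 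Since $a(1,n_0)\in\{1,-1\}$, the triple $\bigl(a(1,n),a(2,n),a(3,n)\bigr)$ is one of $(0,1,0)$, $(0,-1,0)$, $(0,0,1)$, $(0,0,-1)$, $(0,0,0)$, which are exactly the five listed values. The only step demanding genuine care is this bookkeeping in the last paragraph — correctly matching, for each $m$, the positions $m\varphi_0-1,m\varphi_0-2,m\varphi_0-3$ to the coefficients of $\Phi_{n_0}$; beyond that the argument is entirely routine.
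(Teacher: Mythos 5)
Your proof is correct and follows essentially the same route as the paper: both rest on the substitution identity $\Phi_{mn}(z)=\Phi_n(z^m)$, which forces all coefficients of $\Phi_n$ at exponents not divisible by the substitution exponent to vanish, and both then read off the surviving entries from $a(1,\cdot)$ of a squarefree number via Lemma~\ref{lem22}. The only (cosmetic) difference is that you reduce directly to the radical $n_0$ in one step, whereas the paper divides out a single prime $p$ with $p^2\mid n$; your bookkeeping for $m=2$, $m=3$, and $m\ge 4$ is accurate.
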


\begin{proof}
Pick any prime number $p$ with $p^2 \mid n$. The assertion follows
immediately from the fact that
$$
\Phi_n(z) = \Phi_{p \cdot n/p} (z) = \Phi_{n/p}(z^p) \, ,
$$
which, together with Lemma \ref{lem22} and the notation adopted in its
proof, implies that the triple $\bigl( a(1,n), a(2,n), a(3,n)\bigr)$
equals either $(0,a_m,0)$, $(0,0,a_m)$, or $(0,0,0)$; recall that $a_m
\in \{-1,0,1\}$.
\end{proof}

\begin{rem}\label{rem24}
(i) Notice that for every squarefree $n\in \N$ the coefficient
$a(1,n)$ equals $1$ or $-1$, depending on whether $n$ has an odd or an
even number of prime factors; if $n$ is not squarefree then
$a(1,n)=0$. Thus simply $a(1,n)= - \mu (n)$, with $\mu$ denoting the
M\"{o}bius function \cite[\S 16.3]{Hardy}.

(ii) Put together, Lemmas \ref{lem22} and
\ref{lem23} allow for a total of 13 possible patterns
for the four leading coefficients of $\Phi_n$. Each of
those patterns already occurs for $n \le 30$, as well as for
infinitely many $n\in \N$ thereafter.
\end{rem}

Another fact relevant in what follows is that
the actual value of $\Phi_n(\imath)$ can easily be computed.
Recall that $\Z[\imath]= \{k +\imath l : k,l \in \Z \}$ denotes the ring of
Gaussian integers.

\begin{lem}\label{lem25}
Let $n\in \N$. Then $\Phi_n(\imath)\in \Z[\imath]$, and the following holds:
\begin{enumerate}
\item If $4\nmid n$ then $ \Phi_n
  (\imath) \in \{-1, -1+\imath , -\imath, \imath, 1, 1+\imath\}$;
\item If $4\mid n$ then
$$
\Phi_n(\imath) = \left\{
\begin{array}{cll}
0 & & \mbox{\rm if } n = 4 \, , \\
p & & \mbox{\rm if $n = 4 p^j$ for some prime number $p$ and $j\in \N$} \, , \\[0.5mm]
1 & & \mbox{\rm otherwise}\, .
\end{array}
\right.
$$
\end{enumerate}
\end{lem}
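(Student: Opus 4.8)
The plan is to reduce everything to the factorization $\prod_{d\mid n}\Phi_d(z)=z^n-1$, together with its M\"{o}bius inversion $\Phi_n(z)=\prod_{d\mid n}(z^d-1)^{\mu(n/d)}$ and the substitution rule $\Phi_{mn}(z)=\Phi_n(z^m)$ recorded just before Lemma~\ref{lem23}. That $\Phi_n(\imath)\in\Z[\imath]$ will be immediate, since $\Phi_n$ has integer coefficients and $\imath\in\Z[\imath]$. I would first note that $\imath^d-1$ is governed entirely by $d \bmod 4$: it equals $0,\ \imath-1,\ -2,\ -\imath-1$ according as $d\equiv 0,1,2,3\pmod 4$. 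The key arithmetic remark is that each nonzero value here has norm a power of $2$, hence is, up to a unit of $\Z[\imath]$, a power of the Gaussian prime $1+\imath$; indeed $\imath-1=\imath(1+\imath)$ and $-\imath-1=-(1+\imath)$ are associates of $1+\imath$, while $-2=\imath(1+\imath)^2$.

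For part (i) I would assume $4\nmid n$. Then no divisor $d$ of $n$ is divisible by $4$, so every factor $\imath^d-1$ in the M\"{o}bius product is nonzero and the product is a genuine element of $\Z[\imath]$ equal to $\Phi_n(\imath)$. By the remark above, $\Phi_n(\imath)=u\,(1+\imath)^e$ for some unit $u$, where $e=\sum_{d\mid n}\mu(n/d)v(d)$ is the M\"{o}bius-weighted sum of the $(1+\imath)$-adic valuations $v(d)$ of the factors, with $v(d)=1$ for odd $d$ and $v(d)=2$ for $d\equiv 2\pmod 4$. Splitting the divisors of $n$ into the odd ones and those $\equiv 2\pmod 4$ and using $\sum_{d\mid m}\mu(d)=[m=1]$, I expect a short computation to give $e=1$ when $n\in\{1,2\}$ and $e=0$ otherwise. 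Hence for $n\notin\{1,2\}$ the number $\Phi_n(\imath)$ will be a unit, so it lies in $\{1,-1,\imath,-\imath\}$, while the two exceptional cases follow by direct evaluation, $\Phi_1(\imath)=-1+\imath$ and $\Phi_2(\imath)=1+\imath$. This exhausts the six listed values.

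For part (ii) I would assume $4\mid n$ and write $n=2^a m$ with $m$ odd and $a\ge 2$. The substitution rule gives $\Phi_n(z)=\Phi_{2m}(z^{2^{a-1}})$, so $\Phi_n(\imath)=\Phi_{2m}(\imath^{2^{a-1}})$. Since $2^{a-1}$ is even, $\imath^{2^{a-1}}=(-1)^{2^{a-2}}$ equals $-1$ when $a=2$ and $+1$ when $a\ge 3$, reducing the problem to evaluating $\Phi_{2m}$ at the real points $-1$ and $1$. Using the standard identity $\Phi_{2m}(z)=\Phi_m(-z)$ for odd $m>1$ I would rewrite $\Phi_{2m}(-1)=\Phi_m(1)$, and then invoke the classical values of $\Phi_M(1)$ (equal to $0$ if $M=1$, to a prime $p$ if $M$ is a power of $p$, and to $1$ otherwise). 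The case $a=2$ then yields $\Phi_m(1)$, which is $0$ for $m=1$ (so $n=4$), equals $p$ when $m=p^j$ (so $n=4p^j$ with $p$ odd), and equals $1$ otherwise; the case $a\ge 3$ yields $\Phi_{2m}(1)$, which is $2$ for $m=1$ (and then $n=2^a=4\cdot 2^{a-2}=4p^j$) and $1$ for $m>1$. The prime-power values $n=2^a$ (i.e. $m=1$) must be handled by direct evaluation, as $\Phi_{2m}(z)=\Phi_m(-z)$ fails at $m=1$.

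I expect the main obstacle to be bookkeeping rather than any single hard step. In part (ii) one must match the arithmetic of $a$ and $m$ against the three output cases $n=4$, $n=4p^j$, and ``otherwise'', verifying in particular that $n=2^a$ with $a\ge 3$ really has the shape $4p^j$, and that no $n$ of $2$-adic valuation $\ge 3$ with nontrivial odd part is of the form $4p^j$. In part (i) the only delicate point is evaluating the M\"{o}bius exponent $e$ and confirming it never exceeds $1$; this, together with the fact that the M\"{o}bius product acquires colliding zero and pole factors precisely when $4\mid n$, is exactly what forces the two parts to be treated by different arguments. The external inputs are the identities $\Phi_{mn}(z)=\Phi_n(z^m)$ and $\Phi_{2m}(z)=\Phi_m(-z)$ together with the classical value of $\Phi_M(1)$.
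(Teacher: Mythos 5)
Your proposal is correct. For part (ii) it is essentially the paper's own argument: both reduce $\Phi_n(\imath)$ via the substitution rule $\Phi_{mn}(z)=\Phi_n(z^m)$ to an evaluation of a cyclotomic polynomial at $\pm 1$, then invoke the classical values there, with the same bookkeeping on the $2$-adic valuation of $n$ (in particular the check that $n=2^a$ with $a\ge 3$ is of the shape $4p^j$ with $p=2$, while $2^a m$ with $a\ge 3$ and $m>1$ odd is not). For part (i) you take a mildly different route. The paper simply takes norms in the identity $\imath^n-1=\prod_{d\mid n}\Phi_d(\imath)$: for $n$ odd the left-hand side has norm $2$, which is already accounted for by $|\Phi_1(\imath)|^2=2$, so every remaining factor, in particular $\Phi_n(\imath)$, must be a unit of $\Z[\imath]$ (and similarly for $n\equiv 2 \pmod 4$ using $\Phi_1(\imath)\Phi_2(\imath)=-2$). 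You instead apply M\"{o}bius inversion and track the $(1+\imath)$-adic valuation $e=\sum_{d\mid n}\mu(n/d)v(d)$; your evaluation of $e$ (equal to $1$ for $n\in\{1,2\}$ and to $0$ otherwise, obtained by splitting the divisors into the odd ones and those congruent to $2$ modulo $4$ and using that $\sum_{d\mid m}\mu(d)$ vanishes for $m>1$) is correct, as is your observation that the inversion formula may legitimately be evaluated at $z=\imath$ precisely because no divisor of $n$ is divisible by $4$ when $4\nmid n$. Both arguments rest on the same factorization of $z^n-1$; the paper's is a line shorter, while yours pins down the exact power of $1+\imath$ rather than just balancing norms.
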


\begin{proof}
Since $\Phi_n$ has integer coefficients, clearly $\Phi_n(\imath)\in
\Z[\imath]$ for all $n$. Also, with $\Phi_1(\imath) = -1 + \imath$,
$\Phi_2(\imath) = 1 + \imath$, and $\Phi_3(\imath) = \imath$,
evidently (i) holds for $n\in \{1,2,3\}$. From now on, therefore, let
$n\ge 4$. Recall that 
\begin{equation}\label{eq251}
z^n - 1 = \prod\nolimits_{1\le j \le n : j \mid n} \Phi_j (z) =
\Phi_1(z) \prod\nolimits_{2\le j \le n : j \mid n} \Phi_j (z) \, .
\end{equation}

To establish (i), assume first that $n$ is odd. In this case, (\ref{eq251}) implies that
the integer $|\Phi_1(\imath)|^2|\Phi_n (\imath)|^2= 2|\Phi_n
(\imath)|^2$ divides $|\imath^n - 1|^2 = 2$, hence $|\Phi_n (\imath)|
=1$, and $\Phi_n (i) \in \{-1,-\imath, \imath, 1\}$.
Next assume that $n\in 2+4\Z$. Now (\ref{eq251}) yields
$$
-2 = (-1+\imath) (1+\imath) \prod\nolimits_{3\le j \le n : j\mid n}
\Phi_j(\imath) \, ,
$$
and hence again $|\Phi_n(\imath)| = 1$. This proves (i).

To establish (ii), consider the case of $n\in 4 \Z$. Plainly $\Phi_4(\imath)
=0$, so henceforth assume $n \ge 8$. There exist
$m\in \N$, prime numbers $p_1> \ldots > p_m$, and $k_1, \ldots ,
k_m\in \N$ such that $n = 4 \prod_{j=1}^m p_j^{k_j}$. If $p_m\ge 3$
then
$$
\Phi_n(\imath) = \Phi_{2p_1 \cdots p_m} \left( \imath^{2 p_1^{k_1 -1}
    \cdots p_m^{k_m - 1}}\right) = \Phi_{2p_1 \cdots p_m} (-1) =
\Phi_{p_1 \cdots p_m}(1) \, .
$$
Thus $\Phi_n(\imath) = p_1$ if $m=1$, and otherwise
$$
\Phi_n(\imath) = \Phi_{p_1 \cdots p_m}(1) = \frac{\Phi_{p_1 \cdots
    p_{m-1}}(1^{p_m})}{\Phi_{p_1 \cdots p_{m-1}}(1)} = 1 \, .
$$
Similarly, if $p_m = 2$ then
$$
\Phi_n (\imath) = \Phi_{p_1 \cdots p_m} \left( \imath^{p_1^{k_1 - 1}
    \cdots p_{m-1}^{k_{m-1}-1} p_m^{k_m + 1}}\right) = \Phi_{p_1
  \cdots p_m} (1) \, ,
$$
and again $\Phi_n(\imath)= 2 = p_m$ if $m=1$, and $\Phi_n(\imath) = 1$ otherwise.
\end{proof}

\begin{rem}\label{rem25a}
Lemma \ref{lem25}(i) allows for a total of six possible values for
$\Phi_n(\imath)$. While the two values $-1+\imath$ and $1+\imath$
occur only for $n=1$ and $n=2$, respectively, each of the other four
values already occurs for $n\le 21$, as well as for infinitely many
$n\in \N$ thereafter.
\end{rem}

Finally, it will matter later on whether the polynomial
$\Phi_n$, which is irreducible over $\Q$, remains irreducible when $\Q$
is replaced with a larger field, in particular with a real quadratic number
field. Consider, therefore, any squarefree integer $d\ge 2$, and let
$\widehat{d}$ be the discriminant of the number field $\Q \bigl( \sqrt{d}
\bigr)$, that is,
$$
\widehat{d} = \left\{
\begin{array}{ccl}
d & & \mbox{\rm if } d \in 1 + 4 \Z \, , \\[0.5mm]
4d & & \mbox{\rm if } d \in \{2,3\} + 4\Z \, .
\end{array}
\right.
$$

\begin{lem}\label{lem26}
Let $n\in \N$, and assume the integer $d\ge 2$ is squarefree. Then the
polynomial $\Phi_n$ is irreducible over $\Q \bigl( \sqrt{d} \bigr)$ if and only if
$\widehat{d} \nmid n$.
\end{lem}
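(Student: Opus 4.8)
The plan is to reduce irreducibility over the quadratic field $K=\Q\bigl(\sqrt{d}\bigr)$ to a membership question in the splitting field of $\Phi_n$, and then to settle that question arithmetically. Write $\zeta_m = e^{2\pi\imath/m}$ and abbreviate $\zeta = \zeta_n$, so that $\Phi_n$ is the minimal polynomial of $\zeta$ over $\Q$ and its splitting field is $\Q(\zeta)$, a Galois extension of $\Q$ with abelian Galois group $(\Z/n\Z)^\times$. Since the minimal polynomial of $\zeta$ over $K$ divides $\Phi_n$, the polynomial $\Phi_n$ remains irreducible over $K$ precisely when $[K(\zeta):K]=\varphi(n)$. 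As $\Q(\zeta)/\Q$ is Galois, the standard compositum formula gives $[K(\zeta):K]=\bigl[\Q(\zeta):K\cap\Q(\zeta)\bigr]$, and because $[K:\Q]=2$ the intersection $K\cap\Q(\zeta)$ is either $\Q$ or $K$. Thus $[K(\zeta):K]=\varphi(n)$ holds exactly when $K\cap\Q(\zeta)=\Q$, that is, when $\sqrt{d}\notin\Q(\zeta)$. In summary, $\Phi_n$ is irreducible over $\Q\bigl(\sqrt{d}\bigr)$ if and only if $\sqrt{d}\notin\Q(\zeta)$, and it remains to prove the classical equivalence $\sqrt{d}\in\Q(\zeta)\Leftrightarrow\widehat{d}\mid n$, which I would establish elementarily as follows.

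For the sufficiency $\widehat{d}\mid n\Rightarrow\sqrt{d}\in\Q(\zeta)$, I would invoke quadratic Gauss sums. For each odd prime $p$ the sum $\sum_{a=1}^{p-1}\bigl(\tfrac{a}{p}\bigr)\zeta_p^{a}$ has square $p^\ast:=(-1)^{(p-1)/2}p$, so $\sqrt{p^\ast}\in\Q(\zeta_p)$; moreover $\sqrt{-1}=\zeta_4\in\Q(\zeta_4)$ and $\sqrt{2}=\zeta_8+\zeta_8^{-1}\in\Q(\zeta_8)$. Factoring the discriminant $\widehat{d}$ into prime discriminants and forming the corresponding product of these square roots exhibits $\sqrt{d}$ as an element of $\Q(\zeta_{\widehat{d}})$; since $\widehat{d}\mid n$ yields $\Q(\zeta_{\widehat{d}})\subseteq\Q(\zeta)$, this gives $\sqrt{d}\in\Q(\zeta)$.

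The harder direction, and the step I expect to be the main obstacle, is the necessity $\sqrt{d}\in\Q(\zeta)\Rightarrow\widehat{d}\mid n$. Here I would argue through ramification: a rational prime ramifies in $\Q(\zeta)$ only if it divides $n$, with $2$ ramifying only when $4\mid n$, whereas the primes ramifying in $\Q\bigl(\sqrt{d}\bigr)$ are exactly the divisors of $\widehat{d}$. The containment $\Q\bigl(\sqrt{d}\bigr)\subseteq\Q(\zeta)$ therefore forces every odd prime factor of $\widehat{d}$ to divide $n$, and forces $4\mid n$ whenever $d\not\equiv 1\pmod 4$. For $d\equiv 1\pmod 3$ [\emph{sic}: $d\equiv 1\pmod 4$] and for $d\equiv 3\pmod 4$ these conditions already deliver $\widehat{d}\mid n$, since $\widehat{d}$ equals $d$ or $4d$ with $d$ odd. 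The genuinely delicate point is the $2$-adic refinement in the remaining case $d\equiv 2\pmod 4$, where ramification gives only $4\mid n$ but the conductor $\widehat{d}=4d$ demands $8\mid n$.

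To overcome this I would analyse the $2$-part directly. Writing $n=2^a m$ with $m$ odd and using $(\Z/2^a\Z)^\times\cong\Z/2\Z\times\Z/2^{a-2}\Z$ for $a\ge 3$, the group of order-two quotients has rank two, so $\Q(\zeta_{2^a})$ possesses exactly three quadratic subfields, namely $\Q(\sqrt{-1})$, $\Q(\sqrt{2})$, and $\Q(\sqrt{-2})$; of these the two real ones appear only once $a\ge 3$, while $\Q(\zeta_4)$ contains only $\Q(\sqrt{-1})$. Combining this with the decomposition of $\Q(\zeta)$ as the compositum of $\Q(\zeta_{2^a})$ with the odd prime-power cyclotomic fields, one reads off that $\sqrt{d}$ with even $d$ can lie in $\Q(\zeta)$ only if $\sqrt{2}\in\Q(\zeta_{2^a})$, hence only if $a\ge 3$, i.e.\ $8\mid n$. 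Matching the resulting conditions against the definition of $\widehat{d}$ then yields $\widehat{d}\mid n$ in every case and completes the proof.
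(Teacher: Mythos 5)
Your argument is correct, and its first half coincides with the paper's: both proofs reduce irreducibility of $\Phi_n$ over $K=\Q\bigl(\sqrt{d}\bigr)$ to the condition $K\cap\Q\bigl(e^{2\pi\imath/n}\bigr)=\Q$ via the same degree/compositum computation (the paper writes this out with explicit degree bookkeeping rather than quoting the Galois compositum formula, but the content is identical). Where you genuinely diverge is in the arithmetic core, namely the equivalence $\sqrt{d}\in\Q\bigl(e^{2\pi\imath/n}\bigr)\Leftrightarrow\widehat{d}\mid n$. The paper black-boxes this: it cites the fact that $\widehat{d}$ is the conductor of $\Q\bigl(\sqrt{d}\bigr)$, i.e.\ the least $m$ with $\Q\bigl(\sqrt{d}\bigr)\subset K_m$ (Janusz, Cor.~VI.1.2), and combines it with the intersection formula $K_a\cap K_b=K_{\gcd(a,b)}$ to get both directions in two lines. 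You instead prove the conductor statement from scratch --- Gauss sums for sufficiency, and ramification plus the classification of the quadratic subfields of $\Q(\zeta_{2^a})$ for necessity, correctly isolating the only delicate point, the case $d\equiv 2\pmod 4$ where ramification alone gives $4\mid n$ but the conductor demands $8\mid n$. Your route is longer but self-contained and makes visible exactly which arithmetic facts drive the lemma; the paper's is shorter at the cost of two external citations. The only places you should tighten are the final ``one reads off'' step (it deserves the explicit remark that the $2$-component of the quadratic character cutting out $\Q\bigl(\sqrt{2d'}\bigr)$ must be one of the two characters of conductor $8$, since the conductor-$4$ character yields only $\Q\bigl(\sqrt{-1}\bigr)$) and the sign bookkeeping in the Gauss-sum product, where one should note that the product of the $\sqrt{d_i}$ squares to $\widehat{d}>0$ and hence equals $\pm\sqrt{\widehat{d}}=\pm\sqrt{d}$ or $\pm 2\sqrt{d}$; neither is a gap.
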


\begin{proof}
Since the asserted equivalence clearly holds for $n\in \{1,2\}$, let
$n\ge 3$ throughout. For convenience, for every $m\in \N$ denote by
$K_m$ the smallest subfield of $\C$ containing $e^{2\pi \imath /m}$.

Observe first that the irreducibility of $\Phi_n$ over $\Q\bigl( \sqrt{d}\bigr)$ is equivalent to $\Q\bigl( \sqrt{d} \bigr)\cap
K_n = \Q$. Indeed, if $\Q\bigl( \sqrt{d} \bigr) \subset K_n $ then $\bigl[\Q
\bigl( \sqrt{d} , e^{2\pi \imath/n}\bigr): \Q \bigl( \sqrt{d} \bigr)\bigr] = \frac12 [K_n :\Q]=
\frac12 \varphi(n)< \varphi(n)$, showing that $\Phi_n$ cannot be
irreducible over $\Q\bigl( \sqrt{d} \bigr)$. If, on the other hand,
$\Q \bigl( \sqrt{d}\bigr)\not \subset K_n$ then
$\Q\bigl( \sqrt{d} \bigr)\cap K_n = \Q$ and $\bigl[K_n
\bigl(\sqrt{d}\bigr): K_n \bigr]=2$. With this,
$$
2 \bigl[
\Q \bigl(\sqrt{d}, e^{2\pi\imath / n} \bigr) : \Q \bigl( \sqrt{d} \bigr)
\bigr] =
\bigl[
K_n \bigl( \sqrt{d}\bigr) :\Q
\bigr] = 
\bigl[
K_n (\sqrt{d}) : K_n
\bigr] \cdot [K_n : \Q] = 2 \varphi(n) \, ,
$$
hence $\bigl[\Q
\bigl( \sqrt{d} , e^{2\pi \imath/n}\bigr): \Q \bigl( \sqrt{d}
\bigr)\bigr]=\varphi(n)$, which shows that $\Phi_n$ is
irreducible over $\Q\bigl( \sqrt{d} \bigr)$.

It remains to verify that the properties $\Q\bigl( \sqrt{d} \bigr)\cap
K_n = \Q$ and $\widehat{d}\nmid n$ indeed are equivalent. To this end, recall that
$\Q\bigl( \sqrt{d} \bigr)\subset K_{\widehat{d}}$. In fact, $m=\widehat{d}$ is the
smallest $m\in \N$ such that $\Q\bigl( \sqrt{d} \bigr)\subset
K_m$; e.g., see \cite[Cor.VI.1.2]{janusz73}. Thus, if
$\widehat{d}\mid n$ then $\Q\bigl( \sqrt{d} \bigr)\subset K_{\widehat{d}}\subset
K_n$. Conversely, assume $\widehat{d}\nmid n$ and suppose that $\Q\bigl( \sqrt{d} \bigr)\subset
K_n$. Then $\Q\bigl( \sqrt{d} \bigr)\subset K_{\widehat{d}}\cap K_n = K_m$ with
$m = \mbox{\rm gcd}\bigl(\widehat{d},n \bigr)< \widehat{d}$; e.g., see
\cite[(11.24)]{Stroth}. This contradiction proves that $\Q\bigl( \sqrt{d} \bigr)\cap K_n = \Q$ whenever
$\widehat{d}\nmid n$.
\end{proof}

Using the properties of cyclotomic polynomials stated above, it is now
straightforward to identify the minimal polynomials over $\Q$ of
trigonometric algebraic numbers such as $\cos(\pi r)$ or $\sin (\pi r)$, with $r\in
\Q$. Implicitly, this was done already in \cite{Lehmer33}. However, for
the proof of Theorem \ref{thm2} given in the next section, more explicit information about these
polynomials is required. This information is gathered here, starting
from the following well-known fact \cite[Exc.13.17]{Kunz}.

\begin{prop}\label{prop26a}
For every integer $n\ge 0$ there exists a unique polynomial $R_n =
R_n(z)$ such that
\begin{equation}\label{eq27}
z^n + z^{-n} = R_n (z+ z^{-1}) \, , \quad \forall z \in \C \setminus \{0\}
\, .
\end{equation}
The polynomial $R_n$ is monic with integer coefficients and has degree
$n$.
\end{prop}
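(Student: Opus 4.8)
The plan is to establish existence and uniqueness of the polynomials $R_n$ by induction on $n$, exploiting the recursive structure inherent in the identity $z^n + z^{-n} = R_n(z + z^{-1})$. First I would dispose of the base cases: for $n = 0$, the left-hand side is $2$, so $R_0 = 2$; for $n = 1$, the left-hand side is $z + z^{-1}$ itself, so $R_1(w) = w$, where I abbreviate $w = z + z^{-1}$. Both are monic (interpreting the constant $2$ appropriately, though strictly $R_0$ has degree $0$ and leading coefficient $2$, so I would treat $n \ge 1$ as the genuinely monic cases and handle $n=0$ separately) with integer coefficients and the correct degree.

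For the inductive step, the key algebraic fact is the telescoping identity
\begin{equation*}
z^{n+1} + z^{-(n+1)} = (z + z^{-1})(z^n + z^{-n}) - (z^{n-1} + z^{-(n-1)}) \, .
\end{equation*}
This is immediate upon expanding the product on the right. Translating through the substitution $w = z + z^{-1}$, this suggests defining $R_{n+1}(w) = w\, R_n(w) - R_{n-1}(w)$. Assuming inductively that $R_n$ and $R_{n-1}$ exist, are monic with integer coefficients, and have degrees $n$ and $n-1$ respectively, I would verify that $R_{n+1}$ so defined inherits all three properties: it has integer coefficients since $R_n, R_{n-1}$ do; its degree is $1 + n = n+1$ because $w R_n(w)$ has degree $n+1$ while $R_{n-1}$ has strictly smaller degree $n-1$; and its leading coefficient equals that of $w R_n(w)$, namely the leading coefficient of $R_n$, which is $1$. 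This establishes existence together with all the stated structural properties in one stroke.

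Uniqueness requires a separate, short argument, and this is the step I expect to demand the most care. The identity \eqref{eq27} is asserted to hold for all $z \in \C \setminus \{0\}$, but the substitution $w = z + z^{-1}$ does not sweep out all of $\C$ in an obviously surjective way, so I must confirm that two candidate polynomials agreeing on the range of $w$ must coincide as polynomials. The clean way around this is to observe that as $z$ ranges over, say, the punctured complex plane (or even just an infinite subset such as $z \in (0,1)$, where $w = z + z^{-1}$ takes infinitely many distinct real values), the argument $w = z + z^{-1}$ assumes infinitely many values. Hence if $R_n$ and $\widetilde{R}_n$ both satisfy \eqref{eq27}, their difference $R_n - \widetilde{R}_n$ is a polynomial vanishing at infinitely many points, forcing $R_n = \widetilde{R}_n$. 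The only subtlety is to make sure the chosen values of $w$ are genuinely infinite in number and that the polynomial identity is thereby pinned down; restricting $z$ to the interval $(0,1)$ and noting that $z \mapsto z + z^{-1}$ is injective there provides the cleanest justification.
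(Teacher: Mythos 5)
Your proof is correct and rests on exactly the recursion $R_{n+1}(w) = w\,R_n(w) - R_{n-1}(w)$ with $R_0\equiv 2$, $R_1(w)=w$ that the paper records immediately after the proposition (the paper itself gives no proof, deferring to a reference), and your uniqueness argument via the infinitely many values attained by $z+z^{-1}$ is sound. Your caveat that $R_0\equiv 2$ is not literally monic is a fair observation about the statement as printed; it is harmless for the induction, since the inductive step only uses that $\deg R_{n-1} = n-1 < n+1$, not that $R_{n-1}$ is monic.
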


\noindent
Clearly, $R_0 (z)\equiv 2$, $R_1(z) = z$, and for all $n\ge 2$ the
polynomial $R_n$ satisfies
$$
R_n(z) = z R_{n-1}(z) - R_{n-2} (z) \, , \quad \forall z\in \C \, .
$$
Letting $z = e^{\imath t}$ in (\ref{eq27}) yields $2 \cos
(nt) = R_n (2\cos t)$, and hence shows that $R_n (z)= 2 T_n (\frac12
z)$, where $T_n$ is the classical $n$-th Chebyshev polynomial of the
first kind. Also, $R_n (0) = 2 \cos \bigl( \frac12 n\pi \bigr)\in \{-2,0,2\}$, and for
every $n\ge 3$ the polynomial $R_n(z) - z^n + nz^{n-2}$ has degree
less than $n-2$.

To identify the minimal polynomial over $\Q$ of $\cos (\pi r)$, say, observe
that for every $n\ge 2$ and $z\in \C\setminus \{0\}$,
\begin{equation}\label{eq29}
\Phi_{2n}(z) = \sum\nolimits_{j=0}^{\varphi(2n)} a(j,2n)
z^{\varphi(2n) - j} = z^{\frac12 \varphi(2n)} P_n (z+ z^{-1}) \, ,
\end{equation}
with the polynomial $P_n = P_n(z)$ given by
$$
P_n(z) = \sum\nolimits_{j=0}^{\frac12 \varphi(2n) - 1} a(j,2n)
R_{\frac12 \varphi(2n) -j}(z) + a\left( {\textstyle \frac12} \varphi(2n) , 2n \right) \, ;
$$
in addition, define $P_1(z) = z+ 2$. With this, the degree of $P_n$
simply equals $p_n$, where
\begin{equation}\label{eq29a}
p_n = \left\{
\begin{array}{lll} 1 & & \mbox{\rm if } n = 1 \\[0.5mm]
\frac12 \varphi(2n) & & \mbox{\rm if } n\ge 2
\end{array}
\right\}
=
 \left\{
\begin{array}{lll} 1 & & \mbox{\rm if } n = 1 \, , \\
\varphi(n) & & \mbox{\rm if $n\ge 2$ is even}\, , \\[0.5mm]
\frac12 \varphi(n) & & \mbox{\rm if $n\ge 2$ is odd}\, . 
\end{array}
\right.
\end{equation}
For example,
$$
P_2(z)= z \, , \quad P_3(z) = z - 1 \, , \quad P_4(z) = z^2 - 2 \, ,
\quad
P_5(z) = z^2 - z -1 \,  .
$$
Clearly, each $P_n$ is monic with integer coefficients, and
(\ref{eq29}) implies that $P_n$ is irreducible over a field $K$ with
$\Q \subset K \subset \C$ whenever $\Phi_{2n}$ is irreducible over
$K$; in particular, $P_n$ is irreducible over $\Q$. Also, by
(\ref{eq29}) and Lemma \ref{lem25},
\begin{equation}\label{eq210}
|P_n(0)| = |\Phi_{2n}(\imath)| = \left\{
\begin{array}{cll}
2 & & \mbox{\rm if } n = 1 \, , \\
0 & & \mbox{\rm if } n = 2 \, , \\
p & & \mbox{\rm if $n = 2 p^j$ for some prime number $p$ and $j\in \N$} \, , \\[0.5mm]
1 & & \mbox{\rm otherwise}\, .
\end{array}
\right.
\end{equation}
The relevance of the polynomials $P_n$ for the present work comes from
the following simple consequence of (\ref{eq29}) which refines the
classical fact \cite[Thm.1]{Lehmer33}.

\begin{lem}\label{lem210}
Let $r\in \Q$. Then the polynomial $P_{N(r)}$ is the minimal polynomial over $\Q$ of the number
$2(-1)^{1+rN(r)} \cos(\pi r)$. In particular, the degree over $\Q$ of
$\cos (\pi r)$ is $p_{N(r)}$.
\end{lem}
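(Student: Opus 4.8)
The plan is to exploit the factorization (\ref{eq29}), namely $\Phi_{2n}(z) = z^{\frac12\varphi(2n)} P_n(z + z^{-1})$, together with the two facts recorded just after (\ref{eq29}): that $P_n$ is monic with integer coefficients and that it is irreducible over $\Q$. Since a monic polynomial that is irreducible over $\Q$ is automatically the minimal polynomial over $\Q$ of each of its roots, the entire first assertion reduces to showing that the single number $\alpha := 2(-1)^{1 + rN(r)} \cos(\pi r)$ is a root of $P_{N(r)}$.

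To this end I would write $n = N(r)$ and $r = p/n$ with $\gcd(p,n) = 1$, so that $rN(r) = p$ and the sign in $\alpha$ is $(-1)^{1+p}$. First I would identify the roots of $P_n$ for $n \ge 2$: by (\ref{eq21}) each primitive $2n$-th root of unity has the form $w = e^{\pi \imath k/n}$ with $\gcd(k,2n) = 1$ and satisfies $\Phi_{2n}(w) = 0$, whence $P_n(w + w^{-1}) = P_n\bigl(2\cos(\pi k/n)\bigr) = 0$ because $w \ne 0$. A degree count using $\deg P_n = p_n = \frac12 \varphi(2n)$ from (\ref{eq29a}) then shows that the numbers $2\cos(\pi k/n)$ with $\gcd(k,2n) = 1$ are \emph{all} the roots of $P_n$. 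The crux is a short parity argument placing $\alpha$ into this list. If $p$ is odd, then $\gcd(p,2n) = 1$ and $(-1)^{1+p} = 1$, so $\alpha = 2\cos(\pi p/n)$ is at once a root. If $p$ is even, then $n$ is odd (as $\gcd(p,n) = 1$) and $(-1)^{1+p} = -1$; here $2\cos(\pi p/n)$ is \emph{not} a root, since $p$ is not coprime to $2n$, but the sign flip repairs this, for $-\cos\theta = \cos(\pi - \theta)$ gives $\alpha = -2\cos(\pi p/n) = 2\cos\bigl(\pi(n-p)/n\bigr)$ with $k = n-p$ odd and coprime to $n$, hence coprime to $2n$. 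Thus in either case $\alpha$ is a root of $P_n$.

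The small cases $n = 1$ and $n = 2$, where (\ref{eq29}) is not directly available and $P_1(z) = z+2$ is defined separately, I would dispatch by hand: for $n = 1$ one has $r \in \Z$ and $\alpha = -2$, which is the root of $P_1$, while for $n = 2$ one has $\alpha = 0$, the root of $P_2 = z$. This completes the proof that $P_{N(r)}$ is the minimal polynomial of $\alpha$. For the final clause, since $\alpha = \pm 2 \cos(\pi r)$ differs from $\cos(\pi r)$ only by the nonzero rational factor $\pm 2$, we have $\Q(\alpha) = \Q\bigl(\cos(\pi r)\bigr)$, so the degree of $\cos(\pi r)$ over $\Q$ equals $[\Q(\alpha):\Q] = \deg P_{N(r)} = p_{N(r)}$.

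I expect the one genuinely subtle point to be the role of the sign factor $(-1)^{1+rN(r)}$: one must recognize that $2\cos(\pi r)$ itself is a root of $P_{N(r)}$ precisely when $rN(r)$ is odd, and that for even $rN(r)$ the reflection $\theta \mapsto \pi - \theta$ is exactly what is needed to land on a bona fide root $2\cos\bigl(\pi(n-p)/n\bigr)$. Every other ingredient—irreducibility of $P_n$ over $\Q$, the value $\deg P_n = p_n$, and the factorization itself—is already supplied by the material preceding the lemma.
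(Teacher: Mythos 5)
Your proposal is correct and follows essentially the same route as the paper: both reduce the claim to exhibiting $2(-1)^{1+rN(r)}\cos(\pi r)$ as a root of the monic irreducible polynomial $P_{N(r)}$ via the factorization (\ref{eq29}), splitting on the parity of $rN(r)$ and using the shift by $N(r)$ (your $p\mapsto n-p$ is the paper's evaluation at $-e^{\pi\imath r}=e^{2\pi\imath(k+N(r))/(2N(r))}$) to land on a primitive $2N(r)$-th root of unity in the even case. Your extra step of enumerating all roots of $P_n$ by a degree count is harmless but not needed, since it suffices to verify that the one number in question is a root.
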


\begin{proof}
Fix $r\in \Q$, and let $k= rN(r)\in \Z$ for convenience. Note that $k$
and $N(r)$ are coprime. If $N(r)=1$, or equivalently if $r\in \Z$,
then $2(-1)^{1+rN(r)} \cos (\pi r) = -2$ clearly solves
$P_1(z)=0$. Hence assume $N(r)\ge 2$ from now on.

If $k$ is odd then $2N(r)$ and $k$ are coprime, and (\ref{eq29})
implies
$$
P_{N(r)} \bigl( 2\cos (\pi r)\bigr) = \left( e^{\pi \imath r}
\right)^{-p_{N(r)}} \Phi_{2N(r)} \left( e^{2\pi \imath
    k/(2N(r))}\right) = 0 \, .
$$
If, on the other hand, $k$ is even then $N(r)$ is odd, and $2N(r)$ and
$k+N(r)$ are coprime. In this case, (\ref{eq29}) yields
\begin{align*}
P_{N(r)} \bigl(-2\cos(\pi r)\bigr)
& = \left( - e^{\pi \imath r }\right)^{-p_{N(r)}}  \Phi_{2N(r)} \left(
-e^{\pi \imath r} \right) \\
& = \left( - e^{\pi \imath r }\right)^{-p_{N(r)}} \Phi_{2N(r)} \left(
e^{2\pi \imath (k+N(r))/(2N(r))} 
\right) = 0 \, .
\end{align*}
In either case, therefore, $z=2(-1)^{1+k}\cos(\pi r)$ solves $P_{N(r)}(z)=0$.
\end{proof}

\begin{rem}
Fact \ref{fac1} is an immediate consequence of
Lemma \ref{lem210} since, as is easily checked, $p_n =1$ if and only
if $n\in \{ 1,2,3\}$. Note, however, that Fact \ref{fac1} can also be
established in an entirely elementary manner \cite{Jahnel10,
  Olmsted45}. As a simple corollary, observe that the number $\pi^{-1} \arccos \sqrt{r}$, with $r \in
\Q$ and $0\le r \le 1$, is rational if and only if $4r \in
\{0,1,2,3,4\}$; see \cite{Varona06}.
\end{rem}

For the trigonometric algebraic numbers $\sin (\pi r)$, a 
result completely analogous to Lemma \ref{lem210} holds. To state it, for every $n\in \N$ define the
polynomial $Q_n=Q_n(z)$ as
\begin{equation}\label{eq232}
Q_n(z) = \left\{
\begin{array}{lll}
P_{2n}(z) & & \mbox{\rm if $n$ is odd}\, , \\[0.5mm]
P_{n}(z) & & \mbox{\rm if $n\in 4\Z\, ,$}\\[0.5mm]
P_{\frac12 n}(z) & & \mbox{\rm if $n\in 2 + 4 \Z\, , $}\\
\end{array}
\right.
\end{equation}
and note that the degree of $Q_n$ simply equals $q_n$, where
$$
q_n = 
 \left\{
\begin{array}{lll} 1 & & \mbox{\rm if } n = 2 \, , \\[0.5mm]
\frac12 \varphi(n) & & \mbox{\rm if $n\in 2 + 4 \Z$ and $n\ge 6\, , $}\\[0.5mm]
\varphi(n) & & \mbox{\rm otherwise}\, . 
\end{array}
\right.
$$
Each $Q_n$ is monic with integer coefficients, and is irreducible
precisely if the corresponding $P_m$ according to (\ref{eq232}) is
irreducible. The following analogue of Lemma \ref{lem210} is easily
deduced from the latter; details are left to the reader.

\begin{lem}\label{lem211}
Let $r\in \Q$. Then the polynomial $Q_{N(r)}$ is the minimal polynomial over $\Q$ of the number $2(-1)^{k_r}
\sin(\pi r)$, where $k_r = 1 + \frac14
  N(r)|2r-1|$ if $N(r)\in 2 + 4 \Z$, and $k_r = 0$ otherwise. In
particular, the degree over $\Q$ of $\sin(\pi r)$ is $q_{N(r)}$.
\end{lem}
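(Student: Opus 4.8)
The plan is to reduce the statement entirely to Lemma \ref{lem210} by means of the elementary identity $\sin(\pi r) = \cos(\pi s)$, where $s := \tfrac12 - r \in \Q$. Writing $r = p/N(r)$ in lowest terms, so that $p = rN(r)$ and $\gcd\bigl(p,N(r)\bigr)=1$, the first step is to determine $N(s)$ from the expression $s = \bigl(N(r) - 2p\bigr)/\bigl(2N(r)\bigr)$ by cancelling common factors. A short case analysis on the residue of $N(r)$ modulo $4$ then yields $N(s) = 2N(r)$ when $N(r)$ is odd, $N(s) = N(r)$ when $N(r)\in 4\Z$, and $N(s) = \tfrac12 N(r)$ when $N(r)\in 2+4\Z$. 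Comparing with the definition (\ref{eq232}), these are exactly the subscripts appearing there, so that $Q_{N(r)} = P_{N(s)}$ in every case.

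With this identification in hand, Lemma \ref{lem210} applied to $s$ shows that $P_{N(s)} = Q_{N(r)}$ is the minimal polynomial over $\Q$ of the number $2(-1)^{1 + sN(s)}\cos(\pi s) = 2(-1)^{1 + sN(s)}\sin(\pi r)$; in particular $Q_{N(r)}$ is automatically monic, has integer coefficients, and is irreducible over $\Q$. It therefore remains only to verify that this sign agrees with the one in the statement, i.e.\ that $(-1)^{1 + sN(s)} = (-1)^{k_r}$, equivalently that $1 + sN(s) \equiv k_r \pmod 2$.

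This parity check is the one place requiring care, and I would carry it out case by case using the values of $N(s)$ found above. When $N(r)$ is odd, $sN(s) = N(r) - 2p$, so $1 + sN(s) \equiv 1 + N(r)\equiv 0\pmod 2$, matching $k_r = 0$. When $N(r)\in 4\Z$, one finds $sN(s) = \tfrac12 N(r) - p$ with $\tfrac12 N(r)$ even and $p$ odd, so again $1 + sN(s)$ is even, matching $k_r = 0$. The only genuinely delicate case is $N(r)\in 2+4\Z$: writing $N(r) = 2m$ with $m$ odd, a direct computation gives $sN(s) = \tfrac12(m-p)$, while $\tfrac14 N(r)\,|2r-1| = \tfrac12|m-p|$, whence $k_r = 1 + \tfrac12|m-p|$. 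Since $\tfrac12(m-p)$ and $\tfrac12|m-p|$ are integers of equal parity, the required congruence $1 + sN(s)\equiv k_r\pmod 2$ holds here as well. This is the main obstacle; everything else is routine cancellation of fractions together with an appeal to Lemma \ref{lem210}.

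Finally, the degree assertion is immediate: $\sin(\pi r) = \cos(\pi s)$ has degree $p_{N(s)}$ over $\Q$ by Lemma \ref{lem210}, and $p_{N(s)} = \deg P_{N(s)} = \deg Q_{N(r)} = q_{N(r)}$ by the identity $Q_{N(r)} = P_{N(s)}$ established in the first step. (One may optionally double-check the small cases $N(r)\in\{1,2\}$, where $\sin(\pi r)\in\{0,\pm 1\}$, to confirm that the formulas remain valid.)
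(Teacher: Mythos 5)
Your proposal is correct and is exactly the deduction the paper intends: the paper states only that Lemma \ref{lem211} ``is easily deduced from'' Lemma \ref{lem210} and leaves the details to the reader, and your reduction via $\sin(\pi r)=\cos\bigl(\pi(\tfrac12-r)\bigr)$, the computation of $N(\tfrac12-r)$ matching the case split in (\ref{eq232}), and the parity check identifying $(-1)^{1+sN(s)}$ with $(-1)^{k_r}$ supply precisely those details. All three cases of your computation and the final degree identification $\deg Q_{N(r)}=q_{N(r)}$ check out.
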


\section{Proof of the main result}\label{sec3}

By making use of the tools assembled above, it is now possible to present a

\begin{proof}[Proof of Theorem \ref{thm2}]
Fix $r_1,r_2\in \Q$ such that neither $r_1 - r_2$ nor $r_1 + r_2$ is an
integer, and for convenience let $N_j = N(r_j)$ for $j\in \{ 1,2\}$, as well as $n_j = p_{N_j}$ and $z_j = 2(-1)^{1 + r_j N_j} \cos
(\pi r_j)$; plainly, $1$, $\cos(\pi
r_1)$, and $\cos(\pi r_2)$ are $\Q$-independent if and only if $1$,
$z_1$, and $z_2$ are.

To see that (i)$\Rightarrow$(ii), simply note that $p_{N(r)}=1$, and
hence $\cos(\pi r)\in \Q$, whenever $N(r)\le 3$. Thus if $1$, $z_1$,
and $z_2$ are $\Q$-independent then necessarily $N_1, N_2\ge
4$. Also, from (\ref{eq01}) it is evident that $( N_1,N_2)\ne (5,5)$ in this case. 

It remains to establish the reverse implication
(ii)$\Rightarrow$(i). To this end, assume for the time being that
$n_1, n_2\ge 3$, or equivalently $N_1, N_2\ge 7$. Then (ii)
holds, $z_1$ and $z_2$ both are irrational, and the goal is to show
that $1$, $z_1$, and $z_2$ are $\Q$-independent. Assume, therefore, that
$rz_1 + sz_2 +t = 0$ with $r,s,t\in\Q$. If $r=0$ then $s=t=0$, so
assume further that $r\ne 0$, and w.l.o.g.\ let $r=-1$. Thus, with $s,t\in \Q$ and
$s\ne 0$,
\begin{equation}\label{eq30a}
z_1 = s z_2 + t \, .
\end{equation}
The proof will be complete, at least for the case of $N_1,
N_2\ge 7$, once it is shown that (\ref{eq30a}) always fails. This will
now be done by separately considering two cases.

\medskip

\noindent
{\bf Case I: $t=0$.} 

\noindent
Assume first that $t=0$ in (\ref{eq30a}). Then $z_1$ and $z_2$ have
the same degree over $\Q$, i.e., $n=n_1 = n_2 \ge 3$, as well as
minimal polynomials $P_{N_1}$ and $P_{N_2}$, respectively. From
$$
0 = s^{-n} P_{N_1} (z_1) = s^{-n} P_{N_1} (sz_2) = z_2^n +
\ldots + s^{-n} P_{N_1}(0) \, ,
$$
together with the uniqueness of the (monic) minimal polynomial $P_{N_2}$, it
follows that
\begin{equation}\label{eq31}
P_{N_1}(0) = s^n P_{N_2} (0) \, .
\end{equation}
Recall from (\ref{eq210}) that $|P_{N_j}(0)|$ equals $1$ or a prime
number. Thus, if $|P_{N_1}(0)|\ne |P_{N_2}(0)|$ then
(\ref{eq31}) is impossible for $s\in \Q$. If, on the other hand,
$|P_{N_1}(0)| = |P_{N_2}(0)|$ and (\ref{eq31}) does have a solution then $|s|=1$, which in turn implies that 
$$
\cos (\pi r_1) + \cos (\pi r_2) = 0 \quad\mbox{\rm or} \quad
\cos (\pi r_1) - \cos (\pi r_2) = 0 \, .
$$
In either case, at least one of the numbers $r_1 - r_2$ and $r_1 + r_2$ is an integer,
contradicting the standing assumption that none of them is.
In summary, (\ref{eq30a}) fails whenever $t=0$. In
particular, $z_1/z_2$ is irrational.

\medskip

\noindent
{\bf Case II: $t\ne 0$.}
 
\noindent
Assume from now on that (\ref{eq30a}) holds with $s,t\in \Q$ and
$st\ne 0$. Again, $z_1$ and $z_2$ have the same degree over $\Q$, thus
$n=n_1 = n_2 \ge 3$. For convenience, let
$$
(a_j, b_j ,c_j) = \bigl(
a(1, 2 N_j), a(2, 2 N_j),a(3, 2 N_j) \bigr) \, , \quad j\in \{ 1,2 \} \, ,
$$
and consequently
$$
P_{N_j} (z) = z^n + a_j z^{n-1} + (b_j - n) z^{n-2} + \bigl( c_j -
a_j (n-1) \bigr) z^{n-3} + U_j(z) \, , \quad j\in \{ 1,2\} \, ,
$$
where $U_j$ denotes an appropriate polynomial of degree less than
$n-3$. With (\ref{eq30a}), it follows that
\begin{align}\label{eq32}
0   = s^{-n} P_{N_1} (z_1) & = s^{-n} P_{N_1} (sz_2 + t) \\[1mm]
&= z_2^n + \widetilde{a}_2 z_2^{n-1} + \widetilde{b}_2 z_2^{n-2} +
\widetilde{c}_2 z_2^{n-1} + \widetilde{U}_2 (z_2) =: \widetilde{P}_{N_2} (z_2) \, , \nonumber
\end{align}
with a polynomial $\widetilde{U}_2$ of degree less than $n-3$, and coefficients
\begin{align*}
\widetilde{a}_2 & =\frac{nt+ a_1}{s} \, , \\[0.5mm]
\widetilde{b}_2 & = \frac{ n(n-1) t^2 + 2  a_1(n-1) t + 2(b_1 - n)}{2s^2} \, , \\[0.5mm]
\widetilde{c}_2 & = \frac{ n(n-1) (n-2) t^3 + 3 a_1 (n-1) (n-2)t^2
  + 6(b_1 - n) (n-2) t + 6c_1 - 6a_1 (n-1)}{6s^3} \, .
\end{align*}
Requiring that $\widetilde{P}_{N_2} = P_{N_2}$ yields
\begin{align}\label{eq33}
s a_2 & = nt + a_1  \, , \nonumber \\
2s^2 (b_2 - n)  & = n(n-1) t^2 + 2a_1 (n-1) t + 2 b_1 - 2n  \, , \\
6s^3 \bigl( c_2 - a_2 (n-1)\bigr) & = n(n-1)(n-2) t^3 + 3 a_1 (n-1)(n-2) t^2 + & \nonumber \\
& \quad + 6(b_1 - n) (n-2) t + 6 c_1 - 6 a_1(n-1) \, . \nonumber
\end{align}
Note that (\ref{eq33}) consists of three equations for the two (rational) numbers
$s$ and $t$. Quite plausibly, therefore, (\ref{eq33}) may be contradictory,
which in turn would cause (\ref{eq30a}) to fail also, just as
desired. It will now be shown that this is exactly what happens,
regardless of the actual values of $n\ge 3$ and the coefficient triples $(a_j, b_j,
c_j)$. In order to do so, it is convenient to distinguish three subcases,
depending on whether none, exactly one, or both of the integers $N_j$ are
squarefree. 

\smallskip

{\em Case IIa.} Assume first that neither $N_1$ nor $N_2$ is squarefree. Then,
by Lemma \ref{lem23}, $a_1 = a_2 = 0$, and the first equation in
(\ref{eq33}) reduces to $0= nt$, which contradicts the assumption
$t\ne 0$. Hence (\ref{eq33}) fails if neither $N_1$ nor
$N_2$ is squarefree.

\smallskip

{\em Case IIb.} Next assume that exactly one of the two integers $N_1$ and $N_2$
is squarefree; w.l.o.g.\ let $N_2$ be squarefree. (Otherwise interchange the roles of $z_1$ and $z_2$.) Hence $a_1 = 0$, and by
replacing $z_2$ with $-z_2$ if necessary, it can be assumed that
$a_2=1$ and consequently, by Lemma \ref{lem22}, the pair $(b_2,c_2)$
has exactly one of the following four values:
\begin{equation}\label{eq34}
(1,1)  , \: (1,0) , \: (0,0)  , \:  (0,-1) \, .
\end{equation}
In this case, the first equation in (\ref{eq33}) reads $s=nt$, and
the other two equations become
\begin{eqnarray}\label{eq35}
n(2n^2 + (1 - 2b_2) n - 1) t^2 - 2 (n-b_1) & = & 0 \, , \nonumber
\\[-3.5mm]
& \\ [-3.5mm]
n(6n^3 - (5+6c_2) n^2 - 3n + 2)t^3 - 6(n^2 -(2+b_1) n + 2b_1) t + 6c_1
& = & 0 \, . \nonumber
\end{eqnarray}
Note that $V_0(n;b_2) = 2n^2 + (1-2b_2)n-1 \ne 0$ for all $n\ge 3$ and $b_2
\in \{0,1\}$. It follows that
\begin{equation}\label{eq36}
t^2 = \frac{2}{n} \cdot \frac{n-b_1}{V_0(n;b_2)} \, ,
\end{equation}
and plugging this into the second equation in (\ref{eq35}) yields,
after a short calculation,
\begin{equation}\label{eq37}
t = - \frac{3c_1}{2} \cdot \frac{V_0(n;b_2)}{ V_1(n)} \, ,
\end{equation}
with the cubic polynomial $V_1$ given by
$$
V_1(z) = (2+ 3b_2 -
  3c_2)z^3 + (3 - 2b_1 - 6b_2 - 3b_1b_2 + 3b_1c_2)z^2 - (2+3b_1 -
  6b_1b_2) z + 2b_1 \, .
$$
Note that $c_1 \ne 0$ by (\ref{eq36}) and (\ref{eq37}), and hence $|c_1|=1$. Again, it is readily
confirmed that $V_1(n)\ne 0$ for all $n\ge 3$ and all relevant values
of $b_1$, $b_2$, and $c_2$. In order
for (\ref{eq36}) and (\ref{eq37}) to be compatible, the (seventh degree
polynomial) equation
\begin{equation}\label{eq38}
9 n V_0(n;b_2)^3 = 8(n-b_1) V_1(n)^2
\end{equation}
must be satisfied. It is now an elementary task to check that this is
not the case for any $n\ge 3$, any $b_1 \in \{-1,0,1\}$, and any pair $(b_2,c_2)$ from (\ref{eq34}).
For example, for $b_1= 1$ and $(b_2,c_2)=(1,1)$, the condition (\ref{eq38}) takes the form
\begin{align*}
0 & = 40n^7 + 84 n^6 -446 n^5 +347 n^4 +163 n^3 -211 n^2 -9n +32  \\
 & = (n-1)^3 (2n+1)^2 (10n^2 +41 n -32) \, ,
\end{align*}
which for $n\in \N$ only holds if $n=1$. The altogether eleven other
possibilities for $b_1$ and $(b_2,c_2)$ are
dealt with in a completely similar manner. In summary, (\ref{eq33})
fails if exactly one of the numbers $N_1$ and $N_2$ is
squarefree.

\smallskip

{\em Case IIc.} Finally, assume that both $N_1$ and $N_2$ are squarefree. In
this case, it can also be assumed that $a_1=a_2 = 1$, and the pairs
$(b_1,c_1)$ and $(b_2,c_2)$ each have exactly one of the four values
(\ref{eq34}). Now the first equation in (\ref{eq33}) reads $s = nt+1
$, and with this the two other equations reduce to
\begin{eqnarray}\label{eq38na}
(nt+1)^2 & = & \frac{V_0(n;b_1)}{V_0(n;b_2)} \, , \nonumber \\[-2.5mm]
& \\[-2.5mm]
(nt+1)^3 - 3(nt+1) \frac{V_2(n)}{V_4(n)} & = & 2 \frac{V_3(n)}{V_4(n)}
\, ,  \nonumber
\end{eqnarray}
where the polynomials $V_2$, $V_3$, and $V_4$ are given by
\begin{align*}
V_2(z) & = 2z^3 - (3+2b_1) z^2 - (3-4b_1) z + 2 \, , \\
V_3(z) & = (2+3b_1 - 3c_1) z^2 + (3 - 6b_1) z - 2 \, , \\
V_4(z) & = 6z^3 - (5+6c_2) z^2 - 3z + 2 \, .
\end{align*}
As before, $V_4(n)\ne 0$ for all $n\ge 3$ and $c_2 \in \{-1,0,1\}$.
If $b_1=b_2$ then the first equation in (\ref{eq38na}) yields $nt+1
\in \{-1,1\}$, and so $nt=-2$, since $nt=0$ would contradict the
assumption $t\ne 0$. The second equation
in (\ref{eq38na}) then becomes
$$
0 = V_4(n) + 2V_3(n) - 3V_2(n) = 2(4+6b_1 - 3c_1 - 3c_2)n^2 + 12
(1-2b_1)n - 8 \, ,
$$
which is readily confirmed to not have any integer solution $n\ge 3$
whenever $b_1 = b_2 \in \{0,1\}$ and $c_1,c_2 \in \{-1,0,1\}$. If, on
the other hand, $b_1 \ne b_2$ then in order for the two equations in (\ref{eq38na}) to be compatible, the
(tenth degree polynomial) equation
\begin{equation}\label{eq39}
V_0 (n;b_1) \bigl(
V_0(n;b_1) V_4(n) - 3 V_0(n;b_2) V_2(n)
\bigr)^2 = 4 V_0(n;b_2)^3 V_3(n)^2
\end{equation}
must be satisfied. Similarly to Case IIb, it is straightforward to check
that (\ref{eq39}) does not hold for any $n\ge 3$ and any two pairs
$(b_j,c_j)$ from (\ref{eq34}) with $b_1 \ne b_2$. For example, if 
$(b_1, c_1)=(0,-1)$ and $(b_2,c_2)=(1,1)$ then (\ref{eq39}) takes the form
\begin{align*}
0 & = 672n^{10} -48n^9-1752n^8-20n^7+1332n^6 -92 n^5 -444n^4  + 16n^3 +48n^2  \\
& = 4n^2 (n+1)^2 (2n+1)^2 (42n^4 -129 n^3 +141 n^2 -68n + 12)  \, ,
\end{align*}
which has no solution $n\in \N$. The altogether seven other
possibilities for $(b_1,c_1)$ and $(b_2,c_2)$ with $b_1 \ne b_2$ are
dealt with in a completely similar manner. As a consequence,
(\ref{eq33}) fails whenever $N_1$ and $N_2$ are both
squarefree. As explained earlier, this completes the proof of the
implication (ii)$\Rightarrow$(i) in the case of $N_1, N_2\ge 7$.

\medskip

It remains to consider those situations where $N_j\in \{4,5,6\}$
for at least one $j$. Hence assume w.l.o.g.\ that $N_1\in
\{4,5,6\}$, and thus $n_1 = 2$. Clearly, $1$, $\cos(\pi r_1)$, and
$\cos(\pi r_2)$ are $\Q$-independent unless $n_2 = 2$ as well. Thus both
$z_1$ and $z_2$ are roots of one of the irreducible polynomials
$$
P_4(z) = z^2 - 2 \, , \quad P_5(z) = z^2 - z - 1 \, , \quad P_6 (z) =
z^2 - 3 \, .
$$
If, for instance, $N_1=4$ then (\ref{eq30a}) implies that, in analogy to (\ref{eq32}),
\begin{equation}\label{eq310}
0 = s^{-2} P_4(z_1) = s^{-2} P_4 (sz_2 + t) = z_2^2 +  \frac{2t}{s}
z_2 + \frac{t^2 - 2}{s^2} =: \widetilde{P}_4 (z_2) \, .
\end{equation}
Note that $\widetilde{P}_4 \ne P_4$ because otherwise $(s,t)$ would
equal $(1, 0)$ or $(-1,0)$, and therefore, as seen earlier, one of the numbers $r_1-r_2$
and $r_1+r_2$ would be an integer; but also $\widetilde{P}_4\ne P_5$ because
otherwise $5s^2 = 8$, which is impossible for $s\in \Q$; and
$\widetilde{P}_4 \ne P_6$ because otherwise $3s^2 = 2$, which is likewise impossible. The assumption
$N_1=6$ leads to a similar string of contradictions. In summary, this shows
that (\ref{eq30a}) cannot hold whenever $N_1, N_2\in \{4,5,6\}$
but $( N_1, N_2)\ne (5,5)$. 
\end{proof}

\begin{rem}\label{rem32}
The special role played by the case of $N_1=N_2=5$ in the above
argument is highlighted by the fact that, in analogy to
(\ref{eq310}),
$$
s^{-2} P_5(sz+t) = z^2 + \frac{2t-1}{s} z + \frac{t^2 - t -1}{s^2}
=:\widetilde{P}_5(z) \, ,
$$
and $\widetilde{P}_5 = P_5$ for $(s,t)=(-1,1)$. This also explains the
validity of (\ref{eq01}).
\end{rem}

Upon close inspection, the argument given above can be seen to be
independent of the underlying field being $\Q$. The same reasoning
applies over larger fields, provided that $P_n$ remains
irreducible, and (\ref{eq31}) has no solution with $|s|\ne 1$. 
Theorem \ref{thm2} can thus be strengthened without additional effort.

\begin{theorem}\label{thm33}
Let $r_1,r_2\in \Q$ be such that neither $r_1 - r_2$ nor $r_1 + r_2$
is an integer, and assume the integer $d\ge 2$ is squarefree, with
$\mbox{\rm gcd}\bigl( d, N(r_j)\bigr)=1$ for $j\in \{1,2\}$. Then 
the following are equivalent:
\begin{enumerate}
\item The numbers $1$, $\cos(\pi r_1)$, and $\cos (\pi r_2)$ are
  $\Q\bigl( \sqrt{d}\bigr)$-independent;
\item $N(r_j)\ge 4$ for $j\in \{ 1,2\}$, and $\bigl( N(r_1), N(r_2)\bigr)\ne (5,5)$.
\end{enumerate}
\end{theorem}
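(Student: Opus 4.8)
The plan is to show that the proof of Theorem \ref{thm2} transfers to the base field $\Q\bigl(\sqrt d\bigr)$ once the coprimality hypothesis $\gcd\bigl(d,N(r_j)\bigr)=1$ is invoked at the two places flagged in the paragraph preceding the statement. Write $N_j=N(r_j)$, $n_j=p_{N_j}$, and $z_j=2(-1)^{1+r_jN_j}\cos(\pi r_j)$ as before. The implication (i)$\Rightarrow$(ii) needs no new work: any nontrivial $\Q$-linear relation among $1,z_1,z_2$ is \emph{a fortiori} a $\Q\bigl(\sqrt d\bigr)$-linear one, so $\Q\bigl(\sqrt d\bigr)$-independence implies $\Q$-independence, whence (ii) follows from Theorem \ref{thm2}. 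It therefore remains to prove (ii)$\Rightarrow$(i) with $\Q$ replaced by $\Q\bigl(\sqrt d\bigr)$.

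The first step is to verify that $z_1$ and $z_2$ retain their minimal polynomials $P_{N_1},P_{N_2}$ over $\Q\bigl(\sqrt d\bigr)$. By the remark following \eqref{eq29}, $P_{N_j}$ is irreducible over $\Q\bigl(\sqrt d\bigr)$ whenever $\Phi_{2N_j}$ is, and Lemma \ref{lem26} translates the latter into the condition $\widehat d\nmid 2N_j$. This is guaranteed by the hypotheses: if $d\equiv 1\pmod 4$ then $\widehat d=d$ is odd and coprime to $2N_j$, so $\widehat d\nmid 2N_j$; and if $d\equiv 2,3\pmod 4$ then $\widehat d=4d$, while $4d\mid 2N_j$ would force $d\mid N_j$, contradicting $\gcd\bigl(d,N_j\bigr)=1$ and $d\ge 2$. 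Hence the degrees of $z_1,z_2$ over $\Q\bigl(\sqrt d\bigr)$ again equal $n_1,n_2$, and the coefficient-matching apparatus of the original proof is available verbatim.

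Assume next that $N_1,N_2\ge 7$, and suppose towards a contradiction that $z_1=sz_2+t$ holds with $s,t\in\Q\bigl(\sqrt d\bigr)$ and $s\ne0$; as before this forces $n:=n_1=n_2\ge 3$. In Case II ($t\ne0$) the three equations \eqref{eq33} are eliminated down to a polynomial identity in the single integer variable $n$ (such as \eqref{eq38} or \eqref{eq39}); since the failure of such an identity for every integer $n\ge3$ is a statement about integers alone, Case II carries over unchanged. The genuinely new point is Case I ($t=0$), where \eqref{eq31} reads $s^{n}=P_{N_1}(0)/P_{N_2}(0)\in\Q$. I would argue that necessarily $s\in\Q$: if not, then, $\Q\bigl(\sqrt d\bigr)$ being real and $s^{n}$ being fixed by conjugation, the real number $\bar s/s$ is an $n$-th root of unity, hence $\pm1$; as $\bar s=s$ is excluded, $\bar s=-s$, so $s=b\sqrt d$ with $b\ne0$ and $s^{2}=b^{2}d\in\Q$. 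For any prime $\ell\mid d$ this gives $v_\ell(s^{n})=\tfrac n2\bigl(2v_\ell(b)+1\bigr)\ne0$, whereas \eqref{eq210} shows that every prime dividing $P_{N_j}(0)$ also divides $N_j$ and is therefore, by coprimality, distinct from $\ell$, so that $v_\ell\bigl(P_{N_1}(0)/P_{N_2}(0)\bigr)=0$ — a contradiction. Thus $s\in\Q$, and the original reasoning forces $|s|=1$ and hence $r_1-r_2\in\Z$ or $r_1+r_2\in\Z$, against the standing assumption.

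Finally, the small cases $N_j\in\{4,5,6\}$ (where $n=2$) reduce to a finite check modelled on \eqref{eq310}: one equates $s^{-2}P_{N_1}(sz+t)$ with each admissible minimal polynomial $P_{N_2}\in\{P_4,P_5,P_6\}$. The diagonal matches yield $(s,t)=(\pm1,0)$, whence $z_1=\pm z_2$ and $r_1\pm r_2\in\Z$, or the match $(s,t)=(-1,1)$ belonging to the excluded pair $(N_1,N_2)=(5,5)$ responsible for \eqref{eq01}; every off-diagonal match instead forces $s^{2}$ to equal a fixed non-square rational $q$, and $s^{2}=q$ is solvable in $\Q\bigl(\sqrt d\bigr)$ only when $qd$ is a rational square, which for each $q$ singles out one squarefree $d\in\{6,10,15\}$. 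In every instance that exceptional $d$ shares a factor with the relevant $N_1$ and is thus ruled out by $\gcd\bigl(d,N_1\bigr)=1$. I expect this last interaction to be the crux of the argument: over $\Q\bigl(\sqrt d\bigr)$ the scalar $s$ may a priori be a quadratic irrational, and the whole strengthening rests on the observation — supplied by the valuation computation in Case I and the square-free exclusion here — that the coprimality hypothesis removes precisely the finitely many fields in which such a twisted relation could occur.
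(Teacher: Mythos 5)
Your proposal is correct and follows essentially the same route as the paper: establish irreducibility of $P_{N_j}$ over $\Q\bigl(\sqrt d\bigr)$ via Lemma \ref{lem26} (using $\widehat d\nmid 2N_j$), note that Case II reduces to integer polynomial identities and is therefore field-independent, and check that the residual scalar equations acquire no new solutions in $\Q\bigl(\sqrt d\bigr)$. The only local difference is in Case I, where the paper simply observes that $s$ would have degree $n\ge 3$ over $\Q$ and hence cannot lie in a quadratic field, whereas you reach the same conclusion by a valuation argument; your explicit verification of the small cases $N_j\in\{4,5,6\}$ (ruling out $d\in\{6,10,15\}$ via the coprimality hypothesis), which the paper subsumes under ``carries over verbatim,'' is a welcome extra check rather than a departure.
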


\begin{proof}
Since $\Q\bigl( \sqrt{d}\bigr)$-independence implies
$\Q$-independence, the implication (i)$\Rightarrow$(ii) is obvious
from Theorem \ref{thm2}. To see the converse, observe that if
$d$ and $N(r)$ are coprime then $\widehat{d}\nmid 2 N(r)$, and hence 
$P_{N(r)}$, the minimal polynomial over
$\Q$ of $2(-1)^{1+rN(r)}\cos(\pi r)$, is irreducible over $\Q\bigl(
\sqrt{d}\bigr)$, as a consequence of (\ref{eq29}) and Lemma
\ref{lem26}. In particular, $\cos(\pi r)$ has the same degree
$p_{N(r)}$ over $\Q\bigl( \sqrt{d}\bigr)$ as it has over
$\Q$. Furthermore, notice that if $|P_{N(r_1)}(0)|\ne |P_{N(r_2)}(0)|$
then (\ref{eq31}) has no solution $s$ in $\Q\bigl( \sqrt{d}\bigr)$
since the degree over $\Q$ of $s$ is at least $3$. Thus when $\Q$ 
is replaced with $\Q\bigl( \sqrt{d}\bigr)$, the
proof of (ii)$\Rightarrow$(i) carries over verbatim from the proof of Theorem \ref{thm2}.
\end{proof}

To put Theorem \ref{thm33} into perspective, note that with $r_1 =
\frac18$ and $r_2 = \frac38$, the numbers $\cos(\pi r_1)$ and
$\cos(\pi r_2)$, while $\Q$-independent by Theorem \ref{thm2}, are $\Q\bigl( \sqrt{2}\bigr)$-dependent, since
$\cos(3\pi/8) = (\sqrt{2} -1) \cos(\pi/8)$.
This is consistent with the fact that $N(r_1) = N(r_2)=8$ is divisible
by $d=2$. Thus the implication (ii)$\Rightarrow$(i) in Theorem \ref{thm33} may
fail if $d$ and $N(r_j)$ have a common factor. However, this need not happen always, i.e., the numbers $1$,
$\cos (\pi r_1)$, and $\cos (\pi r_2)$ may well be $\Q\bigl(
\sqrt{d}\bigr)$-independent even in cases where $\mbox{\rm gcd}\bigl(d,
N(r_j)\bigr) \ne  1$. To see this, take for instance $r_1 = \frac1{16}$ and $r_2=
\frac7{16}$. Again $d=2$ divides $N(r_1)=N(r_2)=16$, and yet
the numbers $1$, $z_1 = 2 \cos (\pi/16 )$, and
$z_2 =2 \cos(7 \pi/16 )$ are linearly independent over every real
quadratic field. This follows
easily from the fact that the minimal polynomial over $\Q\bigl(
\sqrt{d}\bigr)$ of both $z_1$ and $z_2$ equals $P_{16}$ if $d\ne 2$, and
equals $z^4 - 4z^2 + 2 - \sqrt{2}$ if $d=2$. This example shows that
linear independence may be found even in cases
where $P_{N(r_j)}$ is not irreducible. In fact, the author conjectures
that in the context of Theorem \ref{thm33}, the numbers $1$, $\cos(\pi
r_1)$, and $\cos(\pi r_2)$ are linearly independent over {\em every\/}
real quadratic field whenever $N(r_j)\ge 7$ for $j\in \{1,2\}$, unless
$\bigl( N(r_1) , N(r_2)\bigr) \in \{(8,8),(10,10),(12,12) \}$.

It is natural to ask whether Theorem \ref{thm2} extends to {\em more
  than two\/} trigonometric numbers. For example,
while $1$, $\cos (\pi r_1)$, and $\cos(\pi r_2)$, with $N(r_1) =
N(r_2)=7$ and neither $r_1 - r_2$ nor $r_1  + r_2$ an integer, are
linearly independent over every real quadratic field, any four numbers $1$, $\cos
(\pi r_1)$, $\cos(\pi r_2)$, and $\cos(\pi r_3)$, with
$N(r_j)= 7$ for $j\in \{1,2,3\}$, necessarily are $\Q$-dependent because
\begin{equation}\label{eq320}
2\cos (\pi/7 ) -  2\cos (  2 \pi /7 ) + 2\cos (3   \pi/7 ) = 1\, . \
\end{equation}
Note that (\ref{eq320}), and also (\ref{eq01}), is a special case of
the fact that, for every odd $n\ge 3$,
\begin{equation}\label{eq321}
2 \sum\nolimits_{1\le j < \frac12 n : {\rm gcd}(j,n)=1} (-1)^j \cos
(\pi j/n) = \mu (n) \in \{-1,0,1\} \, .
\end{equation}
With (\ref{eq321}), which is a corollary to an
identity for Ramanujan sums \cite[\S 16.6]{Hardy}
but also follows easily from Remark \ref{rem24}(i) and Lemma \ref{lem210},
it is clear that trigonometric numbers with arbitrarily large
degree over $\Q$ may be $\Q$-dependent. There is, however, a certain
trade-off in that $\Q$-dependence between trigonometric numbers of
large degree only occurs between sufficiently many such
numbers. This suggests that a precise analogue of Theorem \ref{thm2}
for arbitrarily many trigonometric numbers $\cos (\pi r_j)$ may be
delicate to state (and prove). The reader may want to compare this to
the similar question regarding the $\Q$-independence of $1,
\sqrt{r_1}, \ldots , \sqrt{r_m}$ with arbitrary $m\in \N$ and positive $r_j\in \Q$, a question
for which it is straightforward to give a complete answer; cf.\ \cite{Newman60}.

\section{Classifying rational high school triangles}\label{sec4}

As an application of the results and tools assembled so far, this final
section provides a complete classification of rational high school
triangles in the form of Theorem \ref{thm3} and Figure \ref{fig1}. Recall that a planar
triangle $\Delta = (\delta_1, \delta_2, \delta_3)$, with angles
$0<\delta_1\le \delta_2 \le \delta_3<\pi $ and with $\ell_j$ denoting the
side-length vis-\`{a}-vis $\delta_j$, is {\em rational\/} if $\delta_j/\pi\in
\Q$ for all $j$, and is a {\em high school triangle\/} if
$\ell_1:\ell_2:\ell_3 = x_1:x_2:x_3$ with $x_j \in \Q \bigl(
\sqrt{d_j}\bigr)$ for some (not necessarily different) squarefree
integers $d_j \ge 2$. Note that being rational or high school both
are properties of a triangle's similarity type, not of an
individual realization. 

As will become clear shortly, the full proof of
Theorem \ref{thm3} requires a considerable amount of elementary
calculations which will be discussed fully for a few
representative cases only. Based on these, however, the interested
reader will have no difficulty filling in the details for the
remaining cases. Also, it should be emphasized that while the necessary calculations
may be sped up through the use of symbolic mathematical
software, they all can reasonably be carried out by hand as well.

Assume that $\Delta = (\delta_1, \delta_2,
\delta_3)$ is a rational high school triangle, and for convenience let
$r_j = \delta_j/\pi$. By the law of sines,
\begin{equation}\label{eq41}
\frac{\sin (\pi r_j)}{\sin (\pi r_k)} = \frac{\ell_j}{\ell_k} \in \Q
\bigl( \sqrt{d_j}, \sqrt{d_k} \bigr) \, .
\end{equation}
Number fields $K =  \Q \bigl( \sqrt{d_j}, \sqrt{d_k} \bigr)$ will henceforth be referred to as
{\em biquadratic}, a terminology advocated, e.g., by
\cite[V.2]{Froehlich}. Recall that, by the law of cosines,
$$
2\cos (\pi r_j) = \frac{\sum_{k\ne j} (\ell_k/\ell_j)^2
  -1}{\prod_{k\ne j} (\ell_k/\ell_j)} \, ,
$$
from which it is clear that the degree over $\Q$ of $\cos(\pi r_j)$
equals $1,2,4$, or $8$. With (\ref{eq29a}) and Lemma \ref{lem210}, it
is readily confirmed that $p_{N(r_j)}\in \{1,2,4,8\}$ if and only if
either $N(r_j)=1$, which is impossible since $r_j\not \in \Z$, or else
if $N(r_j)$ has one of the following 14 values:
\begin{equation}\label{eq42}
2, \: 3, \: 4, \: 5, \: 6, \: 8, \: 10, \: 12, \: 15, \: 16, \: 17, \:
20, \: 24, \: 30 \, . 
\end{equation}
Recall from Lemma \ref{lem211} that $Q_{N(r)}$ is the minimal
polynomial over $\Q$ of $2(-1)^{k_r}\sin (\pi r)$. For each value $N$
in (\ref{eq42}), Figure \ref{fig2} lists the polynomial $Q_N$
as well as its factorizations over biquadratic fields. In preparation
for the proof of Theorem \ref{thm3} given below, this
information will first be used to address the following question
prompted by (\ref{eq41}).

\begin{quest}\label{qes1}
Let $r_1,r_2\in \Q$ be such that neither $r_1 - r_2$ nor $r_1 + r_2$
is an integer, and assume $N(r_j)$ is contained in {\rm (\ref{eq42})} 
for $j\in \{1,2\}$. For which values of $\bigl( N(r_1), N(r_2)\bigr)$ does
the ratio $\sin (\pi r_1)/\sin (\pi r_2)$ belong to a biquadratic field?
\end{quest}

\begin{figure}[!t]
\begin{center}
\psfrag{t2}[]{\small $2$}
\psfrag{t2a}[l]{\footnotesize $z+2=Q_2(z)$}
\psfrag{t3}[]{\small $3$}
\psfrag{t3a1}[r]{\footnotesize $\sqrt{3} \not \in K$}
\psfrag{t3a2}[l]{\footnotesize $z^2 - 3= Q_3(z)$}
\psfrag{t3b1}[r]{\footnotesize $\sqrt{3}  \in K$}
\psfrag{t3b2}[l]{\footnotesize $z \pm  \sqrt{3}$}
\psfrag{t4}[]{\small $4$}
\psfrag{t4a1}[r]{\footnotesize $\sqrt{2} \not \in K$}
\psfrag{t4a2}[l]{\footnotesize $z^2 - 2=Q_4(z)$}
\psfrag{t4b1}[r]{\footnotesize $\sqrt{2}  \in K$}
\psfrag{t4b2}[l]{\footnotesize $z \pm  \sqrt{2}$}
\psfrag{t5}[]{\small $5$}
\psfrag{t5a1}[r]{\footnotesize $\sqrt{5} \not \in K$}
\psfrag{t5a2}[l]{\footnotesize $z^4 - 5z^2 + 5=Q_5(z)$}
\psfrag{t5b1}[r]{\footnotesize $\sqrt{5}  \in K$}
\psfrag{t5b2}[l]{\footnotesize $z^2 -\frac12 \bigl(5 \pm \sqrt{5}\bigr)$}
\psfrag{t6}[]{\small $6$}
\psfrag{t6a}[l]{\footnotesize $z-1=Q_6(z)$}
\psfrag{t8}[]{\small $8$}
\psfrag{t8a1}[r]{\footnotesize $\sqrt{2} \not \in K$}
\psfrag{t8a2}[l]{\footnotesize $z^4 - 4z^2 + 2=Q_8(z)$}
\psfrag{t8b1}[r]{\footnotesize $\sqrt{2}  \in K$}
\psfrag{t8b2}[l]{\footnotesize $z^2 - 2 \pm  \sqrt{2}$}
\psfrag{t10}[]{\small $10$}
\psfrag{t10a1}[r]{\footnotesize $\sqrt{5} \not \in K$}
\psfrag{t10a2}[l]{\footnotesize $z^2 - z -1=Q_{10}(z)$}
\psfrag{t10b1}[r]{\footnotesize $\sqrt{5}  \in K$}
\psfrag{t10b2}[l]{\footnotesize $z -  \frac12 \bigl(1\pm \sqrt{5}\bigr)$}
\psfrag{t12}[]{\small $12$}
\psfrag{t12a1}[r]{\footnotesize $\bigl\{ \sqrt{2}, \sqrt{3},\sqrt{6}\bigr\} \cap  K = \varnothing $}
\psfrag{t12a2}[l]{\footnotesize $z^4 - 4z^2 + 1=Q_{12}(z)$}
\psfrag{t12b1}[r]{\footnotesize $\bigl\{ \sqrt{2}, \sqrt{3},\sqrt{6}\bigr\} \cap  K = \bigl\{\sqrt{2}\bigr\}$}
\psfrag{t12b2}[l]{\footnotesize $z^2 \pm z\sqrt{2} -1$}
\psfrag{t12c1}[r]{\footnotesize $\bigl\{ \sqrt{2}, \sqrt{3},\sqrt{6}\bigr\} \cap  K = \bigl\{\sqrt{3} \bigr\}$}
\psfrag{t12c2}[l]{\footnotesize $z^2 - 2 \pm\sqrt{3}$}
\psfrag{t12d1}[r]{\footnotesize $\bigl\{ \sqrt{2}, \sqrt{3},\sqrt{6}\bigr\} \cap  K = \bigl\{\sqrt{6}\bigr\}$}
\psfrag{t12d2}[l]{\footnotesize $z^2 \pm z\sqrt{6} +1$}
\psfrag{t12e1}[r]{\footnotesize $K = \Q \bigl(\sqrt{2}, \sqrt{3}\bigr)$}
\psfrag{t12e2}[l]{\footnotesize $z^* - \frac12 \bigl(\sqrt{6} \pm \sqrt{2}\bigr)$}
\psfrag{t15}[]{\small $15$}
\psfrag{t15a1}[r]{\footnotesize $\bigl\{ \sqrt{3}, \sqrt{5},\sqrt{15}\bigr\} \cap  K = \varnothing $}
\psfrag{t15a2}[l]{\footnotesize $z^8 - 7z^6 + 14z^4 - 8z^2 + 1=Q_{15}(z)$}
\psfrag{t15b1}[r]{\footnotesize $\bigl\{ \sqrt{3}, \sqrt{5},\sqrt{15}\bigr\} \cap  K = \bigl\{\sqrt{3}\bigr\}$}
\psfrag{t15b2}[l]{\footnotesize $z^4 \pm z^3\sqrt{3} - 2z^2 \mp 2z \sqrt{3} -1$}
\psfrag{t15c1}[r]{\footnotesize $\bigl\{ \sqrt{3}, \sqrt{5},\sqrt{15}\bigr\} \cap  K = \bigl\{\sqrt{5} \bigr\}$}
\psfrag{t15c2}[l]{\footnotesize $z^4 -\frac12 \bigl(7\pm \sqrt{5}\bigr)z^2 + \frac12
  \bigl(3\pm \sqrt{5}\bigr)$}
\psfrag{t15d1}[r]{\footnotesize $\bigl\{ \sqrt{3}, \sqrt{5},\sqrt{15}\bigr\} \cap  K = \bigl\{\sqrt{15}\bigr\}$}
\psfrag{t15d2}[l]{\footnotesize $z^4 \pm z^3 \sqrt{15} + 4z^2 -1$}
\psfrag{t15e1}[r]{\footnotesize $K = \Q \bigl(\sqrt{3}, \sqrt{5}\bigr)$}
\psfrag{t15e2}[l]{\footnotesize $z^2 + \frac12 \bigl(\sqrt{15} \pm \sqrt{3}\bigr) z^* +
  \frac12 \bigl(1\pm \sqrt{5}\bigr)$}
\psfrag{t16}[]{\small $16$}
\psfrag{t16a1}[r]{\footnotesize $\sqrt{2} \not \in K$}
\psfrag{t16a2}[l]{\footnotesize $z^8 - 8z^6 + 20z^4 - 16z^2 + 2=Q_{16}(z)$}
\psfrag{t16b1}[r]{\footnotesize $\sqrt{2}  \in K$}
\psfrag{t16b2}[l]{\footnotesize $z^4 - 4z^2 + 2\pm \sqrt{2}$}
\psfrag{t17}[]{\small $17$}
\psfrag{t17a1}[r]{\footnotesize $\sqrt{17} \not \in K$}
\psfrag{t17a2}[l]{\footnotesize $z^{16} -17z^{14}+119z^{12} -442
  z^{10} +935z^8-1122z^6+714z^4-204z^2+17= Q_{17}(z)$}
\psfrag{t17b1}[r]{\footnotesize $\sqrt{17}  \in K$}
\psfrag{t17b2}[l]{\footnotesize $z^8 - \frac12 \bigl (17\pm\sqrt{17}\bigr) z^6
  +\frac12 \bigl (51 \pm 7 \sqrt{17}\bigr)z^4 - \bigl (34\pm7\sqrt{17}\bigr)z^2 + 17\pm 4 \sqrt{17}$}
\psfrag{t20}[]{\small $20$}
\psfrag{t20a1}[r]{\footnotesize $\bigl\{ \sqrt{2}, \sqrt{5},\sqrt{10}\bigr\} \cap  K = \varnothing $}
\psfrag{t20a2}[l]{\footnotesize $z^8 - 8z^6 + 19z^4 - 12z^2 + 1=Q_{20}(z)$}
\psfrag{t20b1}[r]{\footnotesize $\bigl\{ \sqrt{2}, \sqrt{5},\sqrt{10}\bigr\} \cap  K = \bigl\{\sqrt{2}\bigr\}$}
\psfrag{t20b2}[l]{\footnotesize $z^4 \pm z^3\sqrt{2} - 3z^2 \mp 3z
  \sqrt{2} - 1$}
\psfrag{t20c1}[r]{\footnotesize $\bigl\{ \sqrt{2}, \sqrt{5},\sqrt{10}\bigr\} \cap  K = \bigl\{\sqrt{5} \bigr\}$}
\psfrag{t20c2}[l]{\footnotesize $z^4 -4 z^2 + \frac12   \bigl (3\pm \sqrt{5}\bigr)$}
\psfrag{t20d1}[r]{\footnotesize $\bigl\{ \sqrt{2}, \sqrt{5},\sqrt{10}\bigr\} \cap  K = \bigl\{\sqrt{10}\bigr\}$}
\psfrag{t20d2}[l]{\footnotesize $z^4 \pm z^3 \sqrt{10} + z^2 \mp z
  \sqrt{10} -1$}
\psfrag{t20e1}[r]{\footnotesize $K = \Q \bigl(\sqrt{2}, \sqrt{5}\bigr)$}
\psfrag{t20e2}[l]{\footnotesize $z^2 +\frac12 \bigl (\sqrt{10} \pm \sqrt{2}\bigr) z^* -
  \frac12 \bigl(1\mp \sqrt{5}\bigr)$}
\psfrag{t24}[]{\small $24$}
\psfrag{t24a1}[r]{\footnotesize $\bigl\{ \sqrt{2}, \sqrt{3},\sqrt{6}\bigr\} \cap  K = \varnothing $}
\psfrag{t24a2}[l]{\footnotesize $z^8 - 8z^6 +20z^4 -16z^2 + 1=Q_{24}(z)$}
\psfrag{t24b1}[r]{\footnotesize $\bigl\{ \sqrt{2}, \sqrt{3},\sqrt{6}\bigr\} \cap  K = \bigl\{\sqrt{2}\bigr\}$}
\psfrag{t24b2}[l]{\footnotesize $z^4 - \bigl (4\pm \sqrt{2}\bigr)z^2 + 3 \pm 2 \sqrt{2}$}
\psfrag{t24c1}[r]{\footnotesize $\bigl\{ \sqrt{2}, \sqrt{3},\sqrt{6}\bigr\} \cap  K = \bigl\{\sqrt{3} \bigr\}$}
\psfrag{t24c2}[l]{\footnotesize $z^4 - 4z^2 + 2 \pm \sqrt{3}$}
\psfrag{t24d1}[r]{\footnotesize $\bigl\{ \sqrt{2}, \sqrt{3},\sqrt{6}\bigr\} \cap  K = \bigl\{\sqrt{6}\bigr\}$}
\psfrag{t24d2}[l]{\footnotesize $z^4 - \bigl (4\pm\sqrt{6}\bigr) z^2 + 5 \pm 2 \sqrt{6}$}
\psfrag{t24e1}[r]{\footnotesize $K = \Q \bigl(\sqrt{2}, \sqrt{3}\bigr)$}
\psfrag{t24e2}[l]{\footnotesize $z^2 -\frac12 \bigl(4 \pm \sqrt{6} \pm
  \sqrt{2}\bigr)$ and $z^2 -\frac12 \bigl(4 \pm \sqrt{6} \mp
  \sqrt{2}\bigr)$}
\psfrag{t30}[]{\small $30$}
\psfrag{t30a1}[r]{\footnotesize $\sqrt{5} \not \in K$}
\psfrag{t30a2}[l]{\footnotesize $z^4 + z^3 - 4z^2 - 4z + 1=Q_{30}(z)$}
\psfrag{t30b1}[r]{\footnotesize $\sqrt{5}  \in K$}
\psfrag{t30b2}[l]{\footnotesize $z^2 +\frac12 \bigl (1\pm \sqrt{5}\bigr) z -
  \frac12 \bigl(3\mp \sqrt{5}\bigr)$}
\includegraphics{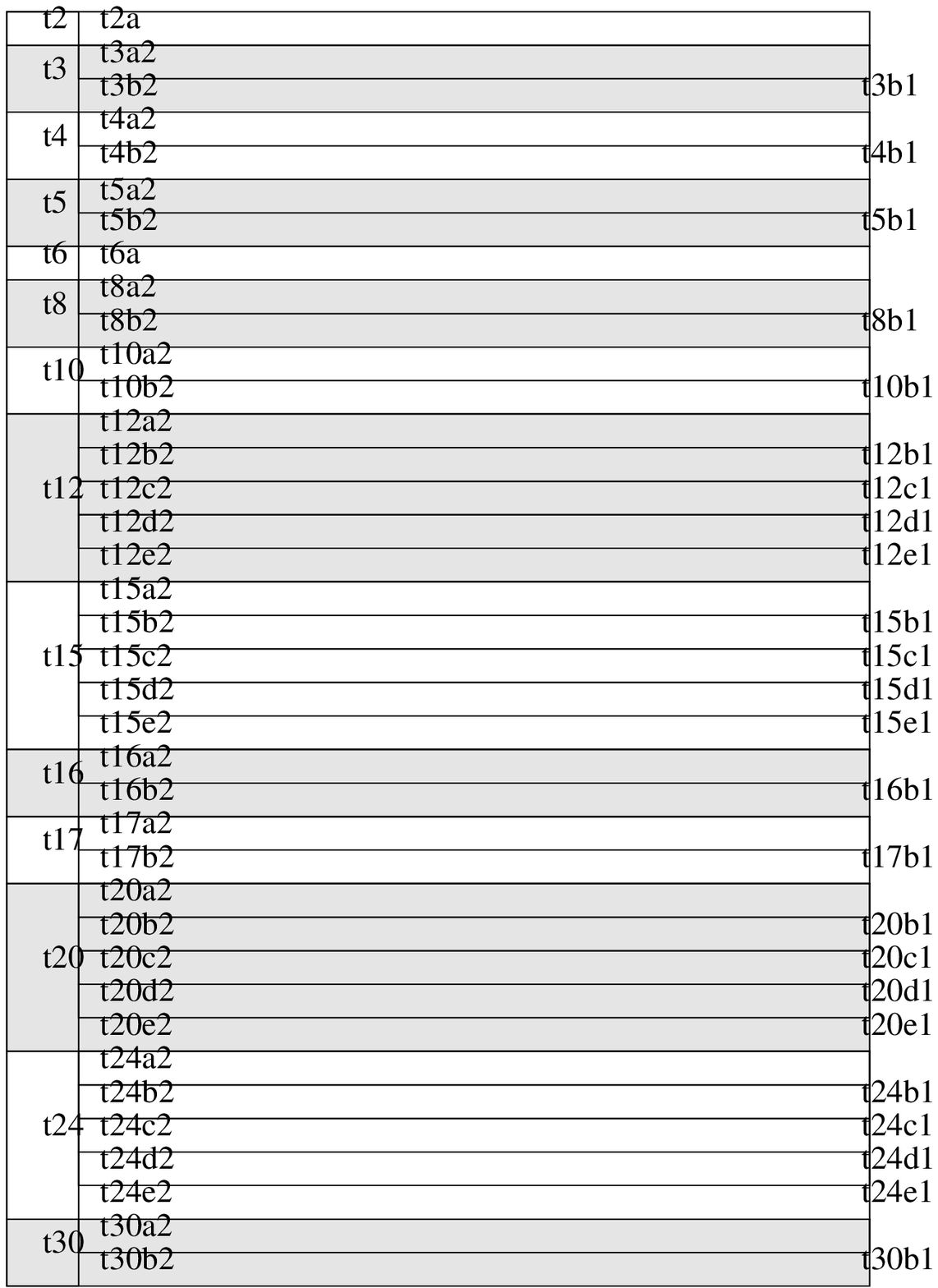}
\caption{The irreducible factors of $Q_N$ for all $N$ in (\ref{eq42}) over
  biquadratic fields $K$, determined by means of Lemmas \ref{lem26} and
  \ref{lem211}, in combination with (\ref{eq29}) and
  (\ref{eq232}). Expressions containing $\pm$ or $\mp$ are to be
  read as two separate expressions, containing only the upper and only
the lower signs, respectively; similarly, read $z^*$ as both $z^*=z$ and
$z^*=-z$.}\label{fig2}
\end{center}
\end{figure}

To address Question \ref{qes1}, fix $r_1,r_2\in \Q$ such that neither
$r_1-r_2$ nor $r_1 + r_2$ is an integer, and assume $N_j = N(r_j)$ is contained in
(\ref{eq42}) for $j\in \{1,2\}$. For convenience, let $w_j = 2 (-1)^{k_{r_j}}\sin (\pi
r_j)$; note that $w_1w_2 \ne 0$ and $(w_1/w_2)^2\ne 1$. 
Throughout, denote by $K$ any (real) biquadratic
field, further to be specified as appropriate.  Question \ref{qes1} is
well-posed in the sense that if $\sin (\pi r_1)/\sin (\pi r_2)$ is an
element of $K$ then so is $\sin (\pi s)/\sin (\pi t)$, provided that $\bigl( N(s),
N(t)\bigr)=(N_1, N_2)$. In other words, whether or not $\sin
(\pi r_1)/\sin (\pi r_2)\in K$ depends only on the pair
$(N_1,N_2)$. Also, observe that $\sin (\pi r_1)/\sin (\pi r_2)\in K$,
or equivalently $w_1/w_2\in K$, implies that the algebraic numbers
$w_1$ and $w_2$ have the same degree over $K$, that is, $\deg_K w_1 =
\deg_K w_2$. Failure of the latter equality, henceforth referred to informally
as {\em degree mismatch}, constitutes an easy-to-check sufficient condition for
$w_1/w_2\not \in K$.

In order to answer Question \ref{qes1}, due to symmetry a total of $105$ pairs $(N_1,N_2)$ have to be analysed. As
indicated earlier, the case-by-case analysis of these pairs requires
numerous near-identical calculations and arguments of an elementary nature. For the
reader's convenience only three values in (\ref{eq42}) are treated in
detail here. Put together, these three cases cover $39$ relevant pairs,
and the calculations presented are fully representative of the
ones left to the reader. Assume throughout that $u= w_1/w_2\in K$ with
$u\ne 0$ and $u^2 \ne 1$.

\medskip

\noindent
{\bf Case I: $N_1 = 2$.} 

\noindent
Plainly, $\deg_K w_2 = 1$, which, by Figure
\ref{fig2}, is the case if and only if $N_2 \in
\{2,3,4,6,10,12\}$. Conversely, the latter implies $w_1/w_2 \in K$;
see also Figure \ref{fig3}.

\medskip

\noindent
{\bf Case II: $N_1 = 5$.} 

\noindent
As seen in Case I, $(N_1,N_2)\ne (5,2)$. If $(N_1,N_2)=(5,3)$ then necessarily
$\sqrt{3}\not \in K$ but $\sqrt{5} \in K$, and hence $w_1^2 = \frac12\bigl(5
\pm \sqrt{5}\bigr)$ and $w_2^2 = 3$. (Here and throughout, expressions containing $\pm$ or $\mp$ are to be
read as two separate expressions, containing only the upper and
only the lower signs, respectively; cf.\ Figure \ref{fig2}.)
However, the ratio $(6w_1/w_2)^2 = 30
\pm  6 \sqrt{5}$ is easily seen to {\em not\/} be a square in $K$: If
$$
30 \pm  6 \sqrt{5} = \bigl(s_1 + s_2 \sqrt{5} + s_3 \sqrt{d} + s_4
\sqrt{5d} \bigr)^2
$$
for some $s_1, \ldots , s_4 \in \Q$  and squarefree integer $d\ge 2$
with $d\ne 5$, then
$$
s_1^2 + 5s_2^2 + ds_3^2 + 5d s_4^2 = 30 \, , \quad
s_1 s_2 + d s_3 s_4 = \pm 3 \, , \quad
s_1 s_3 + 5 s_2 s_4 = 0 \, , \quad
s_1 s_4 + s_2 s_3 = 0 \, ,
$$
and consequently either $s_1 = 0$, which in turn implies $s_2=0$ and
$ds_3^2 = 15 \pm  6
\sqrt{5}$, or else $s_1 \ne 0$ and hence $s_1^2 = 5s_2^2$ (if $s_3 \ne 0$) or $s_1^2 = 15
\pm  6 \sqrt{5}$ (if $s_3 = 0$). Since each alternative contradicts
the fact that $s_1, \ldots , s_4$ are rational numbers, it follows
that $(N_1, N_2)\ne (5,3)$. A completely analogous argument shows that
$(N_1,N_2)\ne (5,4)$.

If $N_1=N_2=5$ then either the ratio $(w_1/w_2)^2$ or its reciprocal equals
$$
\frac{5+\sqrt{5}}{5 - \sqrt{5}} = \left( \frac{1+\sqrt{5}}{2}
\right)^2 \, ,
$$
and consequently $w_1/w_2 \in \Q \bigl( \sqrt{5} \bigr)$.

Due to degree mismatch, clearly $(N_1,N_2)\ne (5,6)$. Next suppose
that $(N_1, N_2)=(5,8)$, and therefore, by Figure \ref{fig2},
\begin{equation}\label{eq4p10}
w_1^4 - 5 w_1^2 + 5 = 0 \quad \mbox{\rm and} \quad
w_2^4 - 4 w_2^2 + 2 = 0 \, .
\end{equation}
Deduce from (\ref{eq4p10}) via a short
calculation that 
\begin{equation}\label{eq4p11}
4u^8 - 40 u^6 + 110 u^4 - 100 u^2 + 25 = 0 \, .
\end{equation}
The polynomial in (\ref{eq4p11}) is irreducible over $\Q$, and so
$\deg_{\Q}(w_1/w_2)=8$, which contradicts the assumption of $w_1/w_2$ being
an element of a biquadratic field. Hence $(N_1,N_2)\ne (5,8)$. An essentially identical argument yields
$(N_1, N_2)\ne (5,12)$.

The possibility of $(N_1, N_2)=(5,10)$ is immediately ruled out by
degree mismatch since, by Figure \ref{fig2}, $\deg_K w_1 \ne \deg_K w_2$,
regardless of whether $\sqrt{5}\in K$ or $\sqrt{5}\not \in K$.

Next deduce from Figure \ref{fig2} that if $(N_1,N_2)=(5,15)$ then either
$\sqrt{5} \not \in K$ and $K$ contains exactly one of the two numbers
$\sqrt{3}$ and $\sqrt{15}$, or else $K = \Q\bigl( \sqrt{3}, \sqrt{5}\bigr)$. In
the first case, $w_1^4 - 5w_1^2 + 5 = 0$, while
$$
w_2^4 \pm w_2^3 \sqrt{3} - 2 w_2^2 \mp 2 w_2 \sqrt{3} - 1 = 0 \:\:\:
\bigl(   \sqrt{3} \in K\bigr ) \quad
\mbox{\rm or}
\quad
w_2^4 \pm w_2^3 \sqrt{15} + 4 w_2^2 - 1 = 0 \:\:\: \bigl( \sqrt{15} \in K \bigr) \, .
$$
Either alternative immediately yields the
contradiction $u=0$. In the second case,
$$
w_1^2 = {\textstyle \frac{1}{2}} \bigl(5\pm \sqrt{5}\bigr)  \quad
\mbox{\rm and} \quad
w_2^2 +  {\textstyle \frac{1}{2}} \bigl( \sqrt{15} \pm \sqrt{3} \bigr)
w_2^* +  {\textstyle \frac{1}{2}} \bigl(1\pm \sqrt{5}\bigr) = 0 \, ,
$$
where (as in Figure \ref{fig2}) the symbol $w_2^*$ is to be read as both
$w_2$ and $-w_2$. Again, $u=w_1/w_2 \in \Q \bigl( \sqrt{3},
\sqrt{5}\bigr)$ implies $u=0$. In summary, $(N_1,N_2)\ne (5,15)$. Identical
reasoning shows that $(N_1,N_2)\ne (5,20)$ and $(N_1, N_2)\ne (5,30)$.

Observe that $(N_1, N_2)=(5,16)$ is possible only if $\sqrt{2}\in K$
but $\sqrt{5}\not \in K$. However, in this case
$$
w_1^4 - 5w_1^2 + 5 = 0  \quad \mbox{\rm and} \quad 
w_2^4 - 4 w_2^2 + 2 \pm \sqrt{2} = 0 \, ,
$$
which entails the two contradictory conditions
$4u^2 = 5$ and $\bigl( 2\pm \sqrt{2}\bigr)u^4 = 5$ for $u\in K$. Thus $(N_1,N_2)\ne (5,16)$, and essentially
the same argument shows that $(N_1, N_2)\ne (5,24)$.
Finally, note that $(N_1, N_2)\ne (5,17)$ due to degree mismatch.
To summarize, $(N_1,N_2)=(5,N)$ with $N$ contained in (\ref{eq42}) is
possible if and only if $N=5$; see also Figure \ref{fig3}.

\medskip

\noindent
{\bf Case III: $N_1 = 8$.} 

\noindent
As demonstrated in Case I, $(N_1,N_2)\ne (8,2)$. Also, $(N_1,N_2)=(8,3)$ is possible only if
$\sqrt{2}\in K$ but $\sqrt{3}\not \in K$. In this case, however, just
as for the pair $(5,3)$ in Case II, the ratio $(3w_1/w_2)^2 = 6 \pm 3 \sqrt{2}$ is easily
seen to not be a square in $K$, and so $(N_1, N_2)\ne (8,3)$. The
possibility of $(N_1, N_2)\in \{(8,4), (8,6), (8,16), (8,17)\}$ is ruled out
by degree mismatch, while $(N_1, N_2)\ne (8,5)$ by Case II.

Next assume $(N_1,N_2)=(8,8)$. Then either $(w_1/w_2)^2$ or its
reciprocal equals
$$
\frac{2+\sqrt{2}}{2 - \sqrt{2}} = \bigl( 1+\sqrt{2} \bigr)^2 \, ,
$$
which in turn shows that $w_1/w_2 \in \Q \bigl( \sqrt{2} \bigr)$.

If $(N_1, N_2)=(8,10)$ then $\sqrt{2}\in K$ but $\sqrt{5}\not \in
K$. In this case,
$$
w_1^2 - 2\pm \sqrt{2}  = 0 \quad \mbox{\rm and} \quad
w_2^2 - w_2^* - 1 = 0 \, ,
$$
which implies $u=0$, a contradiction. Identical reasoning
yields $(N_1,N_2)\ne (8,12)$ and $(N_1, N_2)\ne (8,30)$ as well.

From Figure \ref{fig2}, it can be seen that $(N_1, N_2)=(8,15)$ is
possible only if $\sqrt{2}\not \in K$ and $K$ contains exactly one of
the three numbers $\sqrt{3}$, $\sqrt{5}$, and $\sqrt{15}$. In this case,
$w_1^4 - 4 w_1^2 + 2 = 0$, and the usual contradiction $u=0$ follows
in case $\bigl\{
\sqrt{3}, \sqrt{15}\bigr\}\cap K \ne \varnothing$. On the other hand,
if $\sqrt{5} \in K$ then
$$
w_2^4 - {\textstyle \frac12} \bigl(7 \pm \sqrt{5} \bigr) w_2^2 + {\textstyle
  \frac12} \bigl(3 \pm \sqrt{5}\bigr) = 0
$$
entails the contradictory conditions $ \bigl( 7\pm \sqrt{15}\bigr)u^2
= 8$ and $ \bigl( 3 \pm \sqrt{5}\bigr)u^4 = 4$ for $u\in K$. In summary, $(N_1, N_2)\ne (8,15)$, and
similarly $(N_1, N_2)\ne (8,20)$.

Finally, assume that $(N_1, N_2)=(8,24)$. This may be the case only if
either $\sqrt{2}\not \in K$ and $K$ contains exactly on of the two
numbers $\sqrt{3}$ and $\sqrt{6}$, or else if $K= \Q \bigl( \sqrt{2},
\sqrt{3}\bigr)$. In the first case, $w_1^4 - 4w_1^2 + 2 = 0$, while
$$
w_2^4 - 4 w_2^2 + 2 \pm \sqrt{3} = 0 \:\:\:  \bigl( 
\sqrt{3}\in K \bigr) \quad
\mbox{\rm or} \quad
w_2^4 - \bigl(  4\pm \sqrt{6}  \bigr)w_2^2 + 5 \pm 2\sqrt{6} = 0 \:\:\:
\bigl(  \sqrt{6}\in K \bigr) \, .
$$
As before, this yields contradictory conditions for $u\in K$.
In the second case, $w_1^2 = 2\pm \sqrt{2}$ while
$$
w_2^2 = {\textstyle \frac12} \bigl( 4 \pm \sqrt{6} \pm \sqrt{2} \bigr) 
\quad \mbox{\rm or} \quad
w_2^2 = {\textstyle \frac12} \bigl( 4 \pm \sqrt{6} \mp \sqrt{2} \bigr)
\, .
$$
Thus the ratio $(w_1/w_2)^2$ may, for instance, have the value
$$
\frac{4 + 2 \sqrt{2}}{4 +\sqrt{6} +\sqrt{2} } =
3 - \sqrt{2} + \sqrt{3} - \sqrt{6}
 = \left( \frac{2-\sqrt{2} -\sqrt{6}}{2}\right)^2 \, .
$$
Hence $w_1/w_2 \in K =\Q \bigl( \sqrt{2}, \sqrt{3}\bigr)$.
In summary, therefore, $(N_1,N_2)=(8,N)$ with $N$ contained in
(\ref{eq42}) is possible if and only if $N\in \{8,24\}$; see also
Figure \ref{fig3}.

\medskip

Figure \ref{fig3} displays graphically the results of the above
analysis, as well as of all the cases
left to the reader. It thus completely answers Question \ref{qes1}. Making use of
Figures \ref{fig2} and \ref{fig3}, it is now straightforward to to carry out the 

\begin{figure}[ht]
\begin{center}
\psfrag{tn1}[]{$N(r_1)$}
\psfrag{tn2}[]{$N(r_2)$}
\psfrag{t2}[]{\small $2$}
\psfrag{t3}[]{\small $3$}
\psfrag{t4}[]{\small $4$}
\psfrag{t5}[]{\small $5$}
\psfrag{t6}[]{\small $6$}
\psfrag{t8}[]{\small $8$}
\psfrag{t10}[]{\small $10$}
\psfrag{t12}[]{\small $12$}
\psfrag{t15}[]{\small $15$}
\psfrag{t16}[]{\small $16$}
\psfrag{t17}[]{\small $17$}
\psfrag{t20}[]{\small $20$}
\psfrag{t24}[]{\small $24$}
\psfrag{t30}[]{\small $30$}
\includegraphics{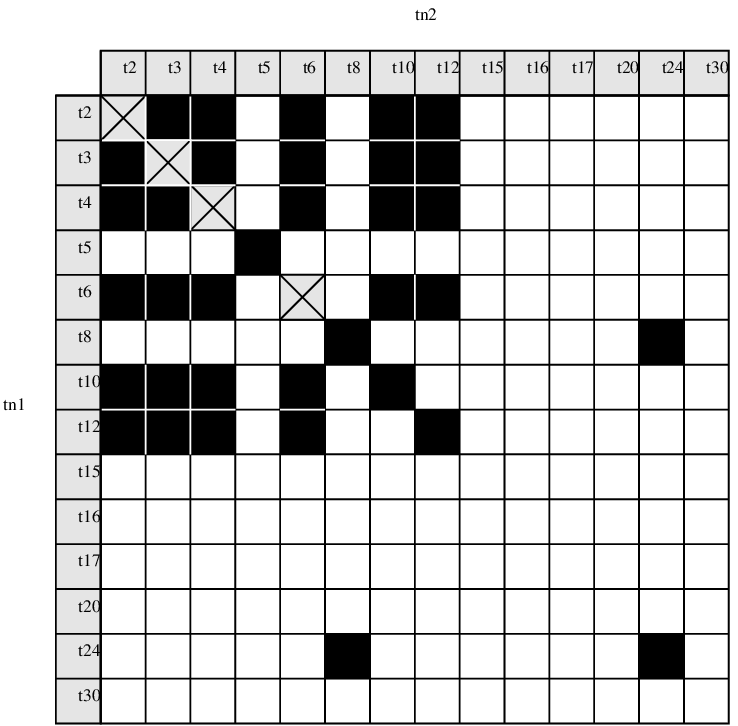}
\caption{For $r_1,r_2 \in \Q$ such that neither $r_1 -r_2$ nor
  $r_1+r_2$ is an integer, and for the values of $N(r_j)$ allowed by (\ref{eq42}), the ratio $\sin (\pi r_1)/\sin (\pi r_2)$ is contained in a
  biquadratic number field if and only if the pair $\bigl( N(r_1),
  N(r_2)\bigr)$ corresponds to a black box. Crossed-out grey boxes indicate
pairs that do not occur under the stated assumptions on $r_1,r_2$.}\label{fig3}
\end{center}
\end{figure}

\begin{proof}[Proof of Theorem \ref{thm3}]
Let the triangle $\Delta = (\delta_1, \delta_2, \delta_3)$ be
rational, and recall that $r_j = \delta_j/\pi$ for $j\in \{1,2,3\}$.
To establish the implication (i)$\Rightarrow$(ii), assume $\Delta$ is
a high school triangle. For convenience, the following argument is
split into two cases.

\medskip

\noindent
{\bf Case I:} $r_1=r_2$ or $r_2=r_3$, i.e., $\Delta$ is isosceles{\bf .}

\noindent
Assume first that $r_1 = r_2 =r$ for some $r\in \Q$ with
$r>0$. Since $r_3 = 1 - 2r \ge r$, necessarily $r\le \frac13$. For
$r=\frac13$ clearly $\Delta$ is equilateral, i.e., $\Delta = \pi
(1,1,1)/3$ and $N(\Delta)=3$. If $r<\frac13$ then neither
$r-(1-2r)=3r-1$ nor $r+(1-2r)=1-r$ is an integer, and consequently the pair $\bigl( N(r), N(1-2r)\bigr)$ must be admissible by
Figure \ref{fig3}. Necessarily, therefore,
$$
N(r) \in \{2,3,4,5,6,8,10,12,24\} \, .
$$
Clearly, $N(r)\in \{2,3\}$ is impossible since $0<r < \frac13$. Moreover, if $N(r)\in \{8,10,24\}$ then $N(1-2r)\mid \frac12 N(r)$,
which is impossible by Figure \ref{fig3}. This only leaves the
possibility of $N(r)\in \{4,5,6,12\}$, and it is straightforward to
check that the latter allows for exactly four (isosceles) triangles, namely
$$
\Delta = \pi (1,1,2)/4 \, , \quad
\Delta = \pi (1,1,3)/5 \, , \quad
\Delta = \pi (1,1,4)/6 \, , \quad \mbox{\rm and} \quad
\Delta = \pi (1,1,10)/12 \, ,
$$
with $N(\Delta)$ equal to $4$, $5$, $6$, and $12$, respectively.

Assume in turn that $r_2 = r_3 = r$ for some $r\in \Q$ with $\frac13
< r <\frac12$. As before, $N(r)\in \{4,5,6,12\}$, which yields
the two triangles
$$
\Delta = \pi (1,2,2)/5 \quad \mbox{\rm and} \quad
\Delta = \pi (2,5,5)/12 \, ,
$$
with $N(\Delta)=5$ and $N(\Delta)=12$, respectively.
In summary, if $\Delta$ is an isosceles rational high school triangle
then $N(\Delta) \in \{3,4,5,6,12\}$. Note that the seven triangles
identified so far are exactly the ones appearing in the left half of
Figure \ref{fig1}.

\medskip

\noindent
{\bf Case II:} $0<r_1<r_2<r_3 < 1${\bf .}

\noindent
Since neither $r_j - r_k$ nor $r_j +r_k$ is an integer, every pair $\bigl( N(r_j),
N(r_k)\bigr)$ must be admissible by Figure \ref{fig3}. Again, this
greatly reduces the number of possible values for $N(r_j)$. First, it is plainly impossible to have $N(r_j)\in
\{15,16,17,20,30\}$.
Second, if $N(r_j)\in \{8,24\}$ for {\em some\/} $j$, then necessarily
$N(r_j)\in \{8,24\}$ for {\em all\/} $j$. But then $r_j N(\Delta)$ is
odd for every $j$ while $N(\Delta)$ is even, and so 
$\sum_{j=1}^3 r_j  = N(\Delta)^{-1} \sum_{j=1}^3 r_j N(\Delta) \ne 1$. This contradiction shows that $N(r_j)\in
\{8,24\}$ is impossible as well. Similarly, if $N(r_j)=5$ for {\em
  some\/} $j$, then $N(r_j)=5$ for {\em all\/} $j$. The latter,
however, is possible only for the two isosceles triangles $\Delta$
with $N(\Delta)=5$ identified in Case I. Overall, it only remains to
consider that case of
\begin{equation}\label{eq4p20}
N(r_j) \in \{2,3,4,6,10,12 \} \, , \quad  j\in \{ 1,2,3 \} \, .
\end{equation}
Again, it is straightforward to see that (\ref{eq4p20}) identifies exactly
seven (non-isosceles) triangles, namely $\Delta = \pi (1,2,3)/6$ with
$N(\Delta)=6$, as well as
\begin{align*}
\Delta & = \pi (1,2,9)/12 \, , \quad  \Delta  =\pi (1,3,8)/12 \, , \quad
\Delta  = \pi (1,4,7)/12 \, , \\
\Delta & = \pi (1,5,6)/12 \, , \quad  \Delta  =\pi (2,3,7)/12 \, , \quad 
\Delta  = \pi (3,4,5)/12 \, ,
\end{align*}
for which $N(\Delta)=12$. Consequently, if $\Delta$ is a non-isosceles rational high
school triangle then $N(\Delta)\in \{6,12\}$. Note that the seven
triangles identified in Case II make up the right half of Figure
\ref{fig1}. Taken together, Case I and II prove the asserted implication
(i)$\Rightarrow$(ii) of Theorem \ref{thm3}. 

To establish the reverse
implication (ii)$\Rightarrow$(i), note first that $N(\Delta)\in
\{3,4,5,6,12\}$ is equivalent to $\Delta$ being equal to exactly one of the
$14$ triangles displayed in Figure \ref{fig1}. Showing that each of
these indeed is a high school triangle requires but a few short,
elementary calculations using the values of $\sin (\pi r)$ provided
(via the factorization of $Q_{N(r)}$) in Figure \ref{fig2}.
\end{proof}

By Theorem \ref{thm3}, a high school triangle $\Delta = (\delta_1,
\delta_2, \delta_3)$ is rational if and only if it equals one of the
$14$ triangles in Figure \ref{fig1}. In any other case, therefore, at least two
of the numbers $r_j = \delta_j/\pi$ are irrational, in fact
transcendental (over $\Q$) by virtue of the Gelfond--Schneider Theorem
\cite[Thm.10.1]{Niven56}. For example, the (Pythagorean) high school triangle with
$\ell_1 :\ell_2 :\ell_3 = 3:4:5$ clearly has $r_3 = \frac12$, while
$r_1 = \pi^{-1} \arccos (4/5)$ and $r_2 = \pi^{-1} \arccos (3/4)$ both
are transcendental \cite[Fact 2]{Calcut10}. Similarly, if $\ell_1 : \ell_2 : \ell_3 = 3:7:8$ then
$r_2 = \frac13$, while $r_1 = \pi^{-1}\arccos (13/14)$ and $r_3 =
\pi^{-1}\arccos (-1/7)$ both are transcendental. On the other hand,
for the high school triangle with $\ell_1 : \ell_2 :\ell_3 = 4:5:6$,
all three numbers $r_j $ are transcendental.

\subsubsection*{Acknowledgements}

The author was supported by an {\sc Nserc} Discovery
Grant. He is much indebted to S.\ Gille and A.\ Weiss for helpful discussions and comments.

\end{document}